\newtheorem{theorem}{Theorem}[section]
\newtheorem{proposition}[theorem]{Proposition}
\newtheorem{lemma}[theorem]{Lemma}
\numberwithin{equation}{section}
\theoremstyle{definition}
\theoremstyle{remark}
\newtheorem{remark}[theorem]{Remark}
\newcommand{\dd}{\,\mathrm{d}}
\newcommand{\PP}{\mathbb{P}}
\newcommand{\EE}{\mathbb{E}}
\newcommand{\RR}{\mathbb{R}}
\newcommand{\NN}{\mathbb{N}}
\newcommand{\ZZ}{\mathbb{Z}}
\newcommand{\lvec}[1]{\overleftarrow{#1}}
\renewcommand{\vec}[1]{\overrightarrow{#1}}
\newcommand{\mc}[1]{\mathcal #1}
\newcommand{\ind}[1]{\mathbbm{1}_{#1}}
\newcommand{\Ind}[1]{\mathbbm{1}_{\{{#1}\}}}
\newcommand{\lrp}[1]{\left(#1\right)}
\newcommand{\lrc}[1]{\left[#1\right]}
\newcommand{\lrch}[1]{\left\{ #1\right\} }
\newcommand{\lrb}[1]{\left\langle #1 \right\rangle}
\newcommand{\bb}[1]{{\mathbbm #1}}
\newcommand{\para}{\lambda}
\newcommand{\ann}{Q_\mu}
\begin{document}

\title{Variable speed symmetric random walk driven by symmetric exclusion}

\author{Ot\'{a}vio Menezes}
\address{Ot\'{a}vio Menezes, Department of Mathematics, Purdue University, West Lafayette, USA}
\email{omenezes@purdue.edu}
\author{Jonathon Peterson} 
\address{Jonathon Peterson, Department of Mathematics, Purdue University, West Lafayette, USA}
\email{peterson@purdue.edu}
\author{Yongjia Xie}
\address{Yongjia Xie, Department of Mathematics, Purdue University, West Lafayette, USA}
\email{xie287@purdue.edu}

\maketitle

\begin{abstract}
We prove a quenched functional central limit theorem for a one-dimensional random walk driven by a simple symmetric exclusion process. This model can be viewed as a special case of the random walk in a balanced random environment, for which the weak quenched limit is constructed as a function of the invariant measure of the environment viewed from the walk. We bypass the need to show the existence of this invariant measure. Instead, we find the limit of the quadratic variation of the walk and give an explicit formula for it.
\end{abstract}

\small	
  \textbf{\textit{Keywords---}} Random walk in random environment ; Quenched functional central limit theorem ; Exclusion process ; Poisson equation
\section{Introduction}

We prove a quenched functional central limit theorem for a one-dimensional random walk driven by a simple symmetric exclusion process. The model belongs to the class of random walks in dynamical random environments. 
Recent works have studied examples where the environment is an interacting particle system, including independent random walks \cite{rwonrw}, the contact process \cite{contact} and the simple symmetric exclusion process (SSEP). 

To define a random walk driven by the SSEP, one fixes parameters $p_1, p_0, \rho \in [0,1]$,  $\lambda_0, \lambda_1 >0$ and makes the random walk jump from $x\in \bb Z$ to $x+1$ at time $t$ at rate $\lambda_1 p_1\eta_t(x) + \lambda_0 p_0(1-\eta_t(x))$, where $\eta_t(x)$ is the state of the exclusion process (either $0$ or $1$) at site $x$ and time $t$, started from equilibrium at density $\rho$. The rate for a jump from $x$ to $x-1$ is $\lambda_1(1-p_1)\eta_t(x)+\lambda_0(1-p_0)(1-\eta_t(x))$. 
Several cases were studied. The results in \cite{hs} and \cite{hkt} that we are about to cite were proven for a discrete-time random walk, but we believe that the continuous-time results we state are true as well.
In \cite{hs}, laws of large numbers and Gaussian fluctuations are proven for $\lambda_0 = \lambda_1$ sufficiently large or sufficiently small and appropriate assumptions on $p_0$ and $p_1$.
When $\lambda_0 = \lambda_1 $, \cite{dossantos} proves that the limiting speed, if any, is strictly between $\lambda_0(2p_0-1)$ and $\lambda_1(2p_1-1)$. In \cite{hkt} it is proven that, for $\lambda_0=\lambda_1 =1$ the law of large numbers holds for all $\rho$, with only two possible exceptions. When the speed is not zero a Gaussian central limit theorem holds and when $p_0 = 1-p_1$ (as in \cite{adhrldp} and \cite{hs}) and $\rho = 1/2$ the speed is zero.      
It is an interesting open problem to find the scale of the fluctuations when $\lambda_0 = \lambda_1$ and $p_0 = 1-p_1$. In this case, only the law of large numbers is known (\cite{hkt}). It is conjectured in \cite{jm}, where the central limit theorem for a weakly asymmetric version of the model was considered (see also \cite{afjv} and \cite{ajv}), that the fluctuations are of order $t^{3/4}$.
A related model where space is continuous was introduced in \cite{hs2}.

Here we allow $\lambda_0 \neq \lambda_1$ but assume $p_0 = p_1 = \frac{1}{2}$. In this setting, the random walk is a time-change of a simple symmetric random walk. The law of large numbers is immediate, and the problem is to prove convergence to Brownian motion and compute the variance of this limiting Brownian motion at time $t$. We perform this computation when the environment starts in equilibrium at density $\rho \in [0,1]$. With those assumptions, our model falls into the class of balanced dynamic random environments. For this class of models an invariance principle was proved in  \cite{guoramirez}. 
In this paper we give an entirely different proof of the invariance principle for this particular model. Since random walks in balanced environments are martingales, the key to proving an invariance principle is in proving that the quadratic variation grows linearly. In all previous proofs of invariance principles for random walks in (static or dynamic) environments this was accomplished by proving the existence of an invariant measure for the environment viewed from the particle that was absolutely continuous with respect to the initial measure on environments (see e.g., \cite{law82,gz12,bd14,guoramirez}). 
In this paper, however, we are able to prove the linear growth of the quadratic variation without any reference to the existence of invariant measures for the environment viewed from the particle. Not only does this give a simpler proof of the invariance principle for this particular model, but it also enables us to compute explicitly the scaling constant in the invariance principle and allows us to obtain quantitative estimates on the rate of convergence for the quadratic variation, see \eqref{annealed-ub}. 

Since the underlying dynamic environment in our model has only two types of sites (particles/holes), the key to analyzing the growth rate of the quadratic variation is to compute the asymptotic fraction of time , $\lim_{t\to \infty}t^{-1}\int_0^t \eta_{X_s}(s)\dd s$. We accomplish this by providing an explicit function $\varphi$ and explicit constants $a$ and $b$ such that $L\varphi \approx a\xi_0 + b$, where $\xi_x(t):=\eta_t(x+X_t)$ and $L$ denotes the generator of the process $(\xi(t))_{t\geq 0}$, the environment as seen by the walk. 
This technique of estimating additive functionals $\int_0^t g(\xi(s))\dd s$ by solving the equation $g(\xi) \approx a + b\, u(\xi)$ was introduced in \cite{kipnisvaradhan}. In the context of random walks in random environments, it has been used in \cite{avenassep}, \cite{kozmatoth} and \cite{agha}, among other works.  


\section{Model and statement of the theorem}
 
 Let $\rho, \lambda \in [0,1]$ and $T>0$ be fixed throughout the paper. Denote by $\mu=\bigotimes_{x\in\ZZ} \text{Ber}(\rho)$ the probability measure on $ \{0,1\}^{\bb Z} $ under which the random variables $\{\eta_x\}_{x\in \bb Z}$ are i.i.d. of mean $\rho$. We consider a nearest-neighbour random walk on $ \bb Z $, driven by the simple symmetric exclusion process (SSEP) with initial distribution $\mu$. Define the joint law of the random walk and the SSEP by the Markov generator
\begin{equation}\label{generator of the joint process}
\begin{aligned}
L^{\mathrm{joint}}f(\eta, x) &  = \sum_{y\in \bb Z} \lrc{f\lrp{\eta^{y,y+1},x}-f\lrp{\eta,x}} \\
&  + \lrc{(1-\para)\eta_x + (1-\eta_x)}\lrc{f(\eta,x+1)+f(\eta,x-1)-2f(\eta,x)}
\end{aligned}
\end{equation}
acting on local functions $ f:\bb Z \times \{0,1\}^{\bb Z} \to \bb R $ (a function $ f:\{0,1\}^{\bb Z} \to \bb R$ is called \emph{local} if $ f(\eta) $ is a function of finitely many of the variables $ \{\eta_x\}_{x\in \bb Z} $). The random walk jumps from a particle at rate $ 1-\lambda$ and from a hole at rate $ 1 $ to one of its neighbors.

For $ k\in \bb Z $ and $ \eta \in \{0,1\}^{\bb Z} $, let $ \theta_k\eta $ denote the element of $ \{0,1\}^{\bb Z} $ defined by $ (\theta_k\eta)_x = \eta_{x+k} $. We use this to define the environment process viewed from the walk $ \xi(t) = \theta_{X_t}\eta(t) $. This is a Markov process, and its generator $ L $ acts on local functions as follows:
\begin{equation}\label{key*}
\begin{aligned}
Lf(\xi) &= L^{ssep}f(\xi)  + \lrc{(1-\para)\xi_0 + (1-\xi_0)}\lrc{f(\theta_1\xi)+f(\theta_{-1}\xi)-2f(\xi)},
\end{aligned}
\end{equation}
where
\begin{equation}\label{1}
L^{ssep}f(\xi):=\sum_{y\in \bb Z} \lrc{f\lrp{\xi^{y,y+1}}-f\lrp{\xi}}
\end{equation}
is the generator of the SSEP with rate $1$.

Define the quenched probability $P^\eta(\cdot)$ on $\ZZ\times [0,\infty )$ as the probability measure of the random walk on underlying environment $\eta=\{\eta_t,t\geq 0\}$. By \eqref{generator of the joint process}, we have for $t,h\geq0$,
\begin{equation}
    P^\eta\left(X_{t+h}-X_{t}=\pm 1|X_t\right)=h\lrc{(1-\para)\eta_{X_t}(t) + (1-\eta_{X_t}(t))}+o(h).
\end{equation}
Define the annealed measure $\PP(\cdot)$ on the same space as
\begin{equation}
    \PP(\cdot)=\int P_\eta(\cdot)\dd \ann(\eta)
\end{equation}
where $\ann$ is the distribution of SSEP $\{\eta(t)\}_{t\geq0}$ with the initial distribution $\eta(0)\sim \mu$,

Our main theorem gives a quenched invariance principle of the walk with explicit scaling parameter(the variance).
\begin{theorem}\label{t1}
	Let $ (X_t,\eta(t))_{t\geq 0} $ be the Markov process generated by $ L^{joint} $,  started from $ X_0 = 0 $ and $ \eta(0) \sim \mu$. Then, for $\ann-$ almost every $\eta$, under the quenched measure $P^\eta$, the sequence of processes
	\begin{equation}\label{2}
	\lrp{\frac{X_{nt}}{\sigma(\rho)\sqrt n}:t\in [0,T]}_{n\in \bb N}
	\end{equation}
	 converges in distribution, with respect to the $J_1$ Skorohod topology, to a standard Brownian motion, where
	\begin{equation}\label{3}
	\sigma^2(\rho) = 2 - \frac{4\para\rho}{2 -\para(1-\rho)}.
	\end{equation}
\end{theorem}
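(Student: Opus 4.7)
The walk $X_t$ is, under $P^\eta$, a c\`adl\`ag martingale with unit jumps and equal rates to $\pm 1$; its predictable quadratic variation is
\[
\langle X\rangle_t \;=\; \int_0^t 2\big[\,1-\lambda\,\xi_0(s)\,\big]\,\dd s.
\]
By Rebolledo's martingale functional central limit theorem, applied to $M^n_t := X_{nt}/\sqrt n$ whose jumps are of size $1/\sqrt n$ and vanish asymptotically, the quenched invariance principle with Brownian limit of variance $\sigma^2(\rho)$ is equivalent to the quenched convergence $n^{-1}\langle X\rangle_{nt}\to \sigma^2(\rho)\,t$ in $P^\eta$-probability, uniformly on compacts. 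Since $\sigma^2(\rho)=2-2\lambda\rho^\ast$ with $\rho^\ast:=2\rho/[2-\lambda(1-\rho)]$, the theorem reduces to proving
\[
\frac{1}{t}\int_0^t \xi_0(s)\,\dd s \;\xrightarrow[t\to\infty]{}\; \rho^\ast
\]
in $P^\eta$-probability (in fact almost surely) for $\ann$-a.e.\ $\eta$, using monotonicity of $s\mapsto\int_0^s\xi_0(u)\,\dd u$ to pass from one-time to uniform convergence.

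To reach this limit without invoking any invariant measure for the process $\xi$, I would implement the Kipnis--Varadhan strategy explicitly. Splitting the generator as $L = L^{\mathrm{ssep}} + (1-\lambda\xi_0)D$ with $Df(\xi):=f(\theta_1\xi)+f(\theta_{-1}\xi)-2f(\xi)$, I would search for a bounded local $\varphi:\{0,1\}^{\bb Z}\to\RR$ and explicit constants $a\neq 0$, $b$ with $-b/a=\rho^\ast$ such that
\[
L\varphi \;=\; a\,\xi_0 + b + R,
\]
where $R$ is either identically zero or a coboundary whose time average vanishes. A purely linear ansatz $\varphi(\xi)=\sum_x c_x\xi_x$ reduces $L\varphi$ to $(2-\lambda\xi_0)\sum_x(\Delta c)_x\xi_x$, and the residual cross terms $\xi_0\xi_{\pm 1}$ block a direct affine reduction. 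I would therefore enrich the ansatz with short-range quadratic corrections, for example a symmetric template
\[
\varphi(\xi) \;=\; \alpha_0\,\xi_0 + \alpha_1(\xi_1+\xi_{-1}) + \alpha_2\,\xi_0(\xi_1+\xi_{-1}) + \alpha_3\,\xi_1\xi_{-1},
\]
possibly augmented by further neighbors, expand $L^{\mathrm{ssep}}\varphi$ via the relevant swaps and $(1-\lambda\xi_0)D\varphi$ by direct translation, collect monomials, and solve the resulting finite linear system by forcing every monomial other than the constant and $\xi_0$ to vanish. The identity $-b/a=\rho^\ast$ should then emerge from the algebra, encoding the asymmetric fraction of time the walker spends on particles rather than being imposed by hand.

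With such a $\varphi$, Dynkin's formula gives
\[
a\int_0^t \xi_0(s)\,\dd s + b\,t \;=\; \varphi(\xi(t))-\varphi(\xi(0)) - N_t - \int_0^t R(\xi(s))\,\dd s,
\]
where $N$ is a mean-zero martingale of bounded jumps with $\langle N\rangle_t = O(t)$, so $N_t/t\to 0$. Bounding the annealed second moment using the carr\'e du champ of $\varphi$ should yield a quantitative $L^2$ rate $\EE\big[\big(t^{-1}\int_0^t(\xi_0(s)-\rho^\ast)\,\dd s\big)^2\big]=O(t^{-1})$, which is the estimate \eqref{annealed-ub} announced in the introduction. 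Chebyshev along a geometric subsequence $t_k=\alpha^k$ combined with the monotonicity noted above promotes this to $P^\eta$-almost-sure convergence for $\ann$-a.e.\ $\eta$, and feeding the result into Rebolledo's theorem closes Theorem~\ref{t1}. The main technical obstacle will be the algebraic construction of $\varphi$ together with the verification that the remainder $R$ is a coboundary controllable by the SSEP dynamics; once this is in place, the rest is standard martingale analysis.
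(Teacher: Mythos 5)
Your reduction of the theorem to the convergence of $t^{-1}\int_0^t\xi_0(s)\,\dd s$ is correct and matches the paper, including the identification $\rho^*=\frac{2\rho}{2-\lambda(1-\rho)}$ and the use of the martingale FCLT. The gap is in the proposed construction of the corrector $\varphi$.

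You propose to find a \emph{bounded, local} $\varphi$ (short-range linear and quadratic terms, plus finitely many more neighbors) and constants $a,b$ with $L\varphi = a\xi_0+b+R$, where $R$ is either zero or a coboundary, by ``forcing every monomial other than the constant and $\xi_0$ to vanish'' in a finite linear system. This system never closes. Applying $L$ to a function supported in $\{-k,\dots,k\}$ produces a function supported in $\{-k-1,\dots,k+1\}$, and the factor $(1-\lambda\xi_0)$ in the drift term multiplies every monomial by $\xi_0$, raising degrees; so each enlargement of the template produces new monomials at new sites, and the hierarchy does not terminate. Concretely, the natural candidates here are telescoping sums: the paper computes $L\xi_x = (2-\lambda\xi_0)(\xi_{x+1}+\xi_{x-1}-2\xi_x)$ and sets $\psi_{n,\ell}(\xi)=-\sum_{k=1}^n\sum_{x=-k+1}^{k-1}(\xi_x-\rho)$, obtaining $L\psi_{n,\ell}=(2-\lambda\xi_0)(2\xi_0-\xi_n-\xi_{-n})$. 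This solves the Poisson equation only \emph{up to a remainder at distance $n$}, and $\psi_{n,\ell}$ is of size $O(n^2)$, not bounded. There is no finite-support version of this.

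The second, and deeper, missing ingredient is how to dispose of the remainder $(2-\lambda\xi_0)(\xi_n+\xi_{-n}-2\rho)$. This is \emph{not} a coboundary that cancels by Dynkin; its vanishing time average reflects the fact that, far from the walker, the exclusion process looks like a fresh Bernoulli$(\rho)$ field, and this must be proved. The paper does it in two steps: replace $\xi_{\pm n}$ by block averages $\vec\xi_n^\ell, \lvec\xi_{-n}^\ell$ (another explicit coboundary, $\varphi_{n,\ell}$, of size $O(n\ell)$), and then control $\vec\xi_n^\ell-\rho$ using the \emph{Lateral Decoupling} estimate for SSEP from \cite{hkt}, restricted to the event that the walker has not travelled distance $m$. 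The three scales $n,\ell,m$ and the exponent $\alpha\in(\tfrac12,\tfrac23)$ are then tuned against each other. Nothing in your proposal anticipates the need for a quantitative space-time mixing input; you defer it as ``the main technical obstacle'' but that obstacle is in fact the heart of the proof. Relatedly, because $\psi_{n,\ell}$ and $\varphi_{n,\ell}$ grow with $n$ and $\ell$, the attainable rate is far from your claimed $\EE[(t^{-1}\int_0^t(\xi_0-\rho^*))^2]=O(t^{-1})$; the paper only obtains a tail bound of order $t^{-1/15}$ (equation \eqref{annealed-ub}), which is precisely the bottleneck coming from balancing the three window sizes against the decoupling estimate, and it is this polynomial-but-slow rate, fed into Borel--Cantelli along $t_k=k^{16}$, that yields the quenched statement (in $P^\eta$-probability, not $P^\eta$-a.s.).
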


This theorem will follow from the next one, which gives the asymptotic fraction of time that the walk spent on top of particles. 

\begin{theorem}\label{t2}  Keep the assumptions of Theorem \ref{t1}. Let $\xi(t) = \theta_{X_t}\eta(t)$. Then, for $\ann-$almost every $\eta$, under the quenched measure $P^\eta$,
	\begin{equation}\label{eqt2}
	\lim_{t\to\infty} \frac{1}{t}\int_0^t(2-\para \xi_0(s))(\xi_0(s) - \rho)\dd s = 0 \mbox{ in probability}.
	\end{equation}
	Or equivalently,
	\begin{equation}
	    \lim_{t\to\infty} \frac{1}{t}\int_0^t\xi_0(s)\dd s = \frac{2\rho}{2-\para+\para\rho} \mbox{ in probability}.
	\end{equation}
\end{theorem}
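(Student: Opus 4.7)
The plan is to apply a Kipnis--Varadhan style argument, constructing an explicit approximate solution of the Poisson equation $L\varphi \approx g$ for the target $g(\xi) := (2-\para\xi_0)(\xi_0 - \rho)$. I take the one-parameter family
\[
\varphi_\alpha(\xi) := \sum_{x \in \ZZ} e^{-\alpha|x|}(\xi_x - \rho), \qquad \alpha \in (0,1),
\]
whose sum converges absolutely with $\|\varphi_\alpha\|_\infty \leq (1+e^{-\alpha})/(1-e^{-\alpha}) \leq 2/\alpha$. Two elementary identities drive the computation: (i) for any summable $a:\ZZ\to\RR$ with $\sum \Delta a = 0$, both $L^{ssep}$ and the symmetric shift $\tau_1+\tau_{-1}-2I$ act on $\sum_x a(x)(\xi_x-\rho)$ as multiplication (on the coefficients) by the discrete Laplacian $\Delta a$; and (ii) for $a(x)=e^{-\alpha|x|}$ one has $\Delta a(x) = 2(\cosh\alpha-1)\,a(x) - 2\sinh\alpha\cdot \1_{\{x=0\}}$. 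Combining these with \eqref{key*} yields
\[
L\varphi_\alpha(\xi) = (2-\para\xi_0)\bigl[2(\cosh\alpha - 1)\varphi_\alpha(\xi) - 2\sinh\alpha\,(\xi_0 - \rho)\bigr],
\]
which rearranges to
\[
g(\xi) = -\frac{L\varphi_\alpha(\xi)}{2\sinh\alpha} + \frac{\cosh\alpha - 1}{\sinh\alpha}\,(2-\para\xi_0)\,\varphi_\alpha(\xi),
\]
realizing the promised identity with an error prefactor of order $\alpha/2$.

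Next I apply Dynkin's formula to $\varphi_\alpha$: writing $\varphi_\alpha(\xi(t))-\varphi_\alpha(\xi(0)) = M_t^\alpha + \int_0^t L\varphi_\alpha(\xi(s))\dd s$ for the associated $P^\eta$-martingale $M^\alpha$, I obtain
\[
\frac{1}{t}\int_0^t g(\xi(s))\dd s = \frac{M_t^\alpha + \varphi_\alpha(\xi(0)) - \varphi_\alpha(\xi(t))}{2t\sinh\alpha} + \frac{\cosh\alpha-1}{\sinh\alpha}\cdot\frac{1}{t}\int_0^t(2-\para\xi_0(s))\varphi_\alpha(\xi(s))\dd s.
\]
The boundary contribution is deterministically $O(1/(t\alpha^2))$ by the sup bound on $\varphi_\alpha$. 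For the martingale, a direct computation of the carré du champ $\Gamma\varphi_\alpha$ shows that the SSEP part equals $\sum_y(e^{-\alpha|y+1|}-e^{-\alpha|y|})^2=O(\alpha)$, while the walker part is pointwise bounded by $8$ via the telescoping identity $\sum_y|e^{-\alpha|y-1|}-e^{-\alpha|y|}|=2$. Hence $\EE^\eta[(M_t^\alpha)^2]\leq Ct$, and $M_t^\alpha/(t\sinh\alpha)\to 0$ in $L^2(P^\eta)$ as $t\to\infty$ for any fixed $\alpha>0$.

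The main obstacle is the residual error term: its prefactor is $\sim\alpha/2$, but the sup bound $|\varphi_\alpha|\leq 2/\alpha$ gives only a trivial $O(1)$ estimate. To force it to zero I would establish the uniform-in-$s$ bound $\EE[\varphi_\alpha(\xi(s))^2]\leq C/\alpha$ under the annealed measure $\PP$. Expanding as a double sum, the diagonal contribution is $\rho(1-\rho)\sum_x e^{-2\alpha|x|}=O(1/\alpha)$, immediate from the fact that $\eta(s)\sim\mu$ marginally (product Bernoulli is invariant for the bare SSEP). The off-diagonal terms $\sum_{y_1\neq y_2}\EE[e^{-\alpha(|y_1-X_s|+|y_2-X_s|)}(\eta_{y_1}(s)-\rho)(\eta_{y_2}(s)-\rho)]$ are the delicate piece, because the walker's position $X_s$ is not independent of the pair $(\eta_{y_1}(s),\eta_{y_2}(s))$ and one cannot simply factor. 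I would control them via the graphical (stirring) construction of SSEP, which expresses $\eta(s)$ through the initial product configuration $\eta(0)\sim\mu$ and an independent family of exchange clocks, allowing the $\eta(0)$-expectation to be taken first and kill the covariance while leaving the stirring-dependent weights intact. With the $L^2$ bound in hand, Cauchy--Schwarz in time yields an $L^1(\PP)$ bound of order $\sqrt{\alpha}$ on the error. Choosing $\alpha=\alpha(t)\to 0$ slowly (e.g.\ $\alpha(t)=t^{-1/4}$, so $t\alpha^2\to\infty$ and $\alpha\to 0$) then gives annealed convergence in probability of $\tfrac{1}{t}\int_0^t g(\xi(s))\dd s$ to $0$, and a subsequence/Borel--Cantelli argument upgrades this to the quenched convergence \eqref{eqt2}. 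The equivalent form for $\tfrac{1}{t}\int_0^t \xi_0(s)\dd s$ follows by inverting the linear relation $(2-\para y)(y-\rho)=(2-\para+\para\rho)y-2\rho$ on $y\in\{0,1\}$.
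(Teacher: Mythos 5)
Your exponential test function $\varphi_\alpha(\xi)=\sum_x e^{-\alpha|x|}(\xi_x-\rho)$ is a genuinely different route from the paper's, and the algebra is correct: the identity $L\varphi_\alpha = (2-\lambda\xi_0)\bigl[2(\cosh\alpha-1)\varphi_\alpha - 2\sinh\alpha(\xi_0-\rho)\bigr]$ checks out, the boundary term is $O(1/(t\alpha^2))$, and the carré du champ bound $\Gamma\varphi_\alpha = O(1)$ does give $\bb E[(M_t^\alpha)^2] \le Ct$. (One small imprecision: $M^\alpha$ is a martingale under the annealed measure $\PP$, since $\xi(\cdot)$ is only Markov under $\PP$, not under $P^\eta$; but since you finish by transferring via Borel--Cantelli, this doesn't hurt you.)

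The gap is in the error term. Your argument reduces to the claim $\bb E[\varphi_\alpha(\xi(s))^2]\le C/\alpha$ uniformly in $s$, and the method you propose for the off-diagonal covariances does not work. You want to expand $\sum_{y_1\neq y_2}\bb E\bigl[e^{-\alpha(|y_1-X_s|+|y_2-X_s|)}(\eta_{y_1}(s)-\rho)(\eta_{y_2}(s)-\rho)\bigr]$ via the stirring construction, take the $\eta(0)$-expectation first, and kill the covariance because $\eta_{y_1}(s)$ and $\eta_{y_2}(s)$ read two distinct coordinates of the i.i.d.\ field $\eta(0)$. But the weight $e^{-\alpha(|y_1-X_s|+|y_2-X_s|)}$ is not ``stirring-dependent only'': $X_s$ depends on $\eta(0)$ through the state-dependent jump rate $(1-\lambda)\xi_0+(1-\xi_0)$, even after conditioning on every Poisson clock in the graphical construction. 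Hence conditioning on the stirring and the walker's clocks and then integrating out $\eta(0)$ does \emph{not} factor the expectation; the cross-covariance created by the walker is exactly the quantity one has to bound. Worse, those covariances are largest precisely at small $|y_i-X_s|$, where the exponential weight is $\Theta(1)$, so the dangerous part of the sum has $\Theta(1/\alpha^2)$ terms with no decoupling separation available -- a naive bound gives $O(1/\alpha^2)$ rather than the needed $O(1/\alpha)$.

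This is exactly the obstacle the paper's three-term decomposition is designed to sidestep. The paper's $\psi_{n,\ell}$ and $\varphi_{n,\ell}$ both have deterministic sup bounds (so the boundary/martingale pieces require no probabilistic input at all), and the remaining error is a \emph{block average at distance $n\gg t^{1/2}$ from the origin}, to which the lateral decoupling Proposition~\ref{prop:lat_decoupling} applies after conditioning on the walker's path. Your $\varphi_\alpha$ smears mass continuously over scales from $O(1)$ out to $O(1/\alpha)$, so there is no spatial buffer between ``walker-correlated'' sites and ``far'' sites, and the decoupling lemma does not plug in cleanly. To rescue your approach you would need a substitute for the stirring argument that quantifies the decay of $\mathrm{Cov}(\xi_{x}(s),\xi_{y}(s))$ under the annealed measure, which is a nontrivial additional input. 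As written, the step ``take the $\eta(0)$-expectation first and kill the covariance'' is false.
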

Theorem \ref{t2} shows the convergence under the quenched measure, which will automatically imply the same convergence result under the annealed measure. Moreover, the rate of convergence under the annealed measure has an upper bound estimation, which is also a key tool to prove Theorem \ref{t2}. This rate of convergence result is shown as follows.
\begin{theorem}\label{t3}
    Keep the assumptions of Theorem \ref{t1}. Let $\xi(t) = \theta_{X_t}\eta(t)$. For any $\epsilon>0$, there exist $T=T(\epsilon)>0$ and $C=C(\epsilon)>0$, such that for any $t>T$,
    \begin{equation}
        \PP\lrc{\frac{1}{t}\Big|\int_0^t(2-\para\xi_0(s))(\xi_0(s)-\rho)\dd s\Big|\geq \epsilon}\leq C t^{-\frac{1}{15}}.
    \end{equation}
\end{theorem}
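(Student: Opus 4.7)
The plan is to prove Theorem~\ref{t3} by an approximate Poisson-equation / Dynkin martingale argument in the spirit of Kipnis--Varadhan. Setting $g(\xi) = (2-\para\xi_0)(\xi_0 - \rho)$, I construct a local function $\varphi_n$ such that $L\varphi_n$ equals $g$ up to a remainder supported on sites at distance $n$ from the origin. Dynkin's formula then decomposes $\int_0^t g(\xi(s))\,ds$ into a boundary term, a mean-zero martingale, and this remainder; each piece is estimated in $L^2(\PP)$ and turned into a tail bound via Chebyshev's inequality.

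A direct calculation using the explicit form of $L$ in \eqref{key*} shows that for any finitely supported sequence $(c_x)_{x\in\bb Z}$,
\begin{equation*}
L\lrc{\sum_x c_x(\xi_x - \rho)} = (2-\para\xi_0)\sum_y (\Delta c)_y(\xi_y - \rho),
\end{equation*}
where $\Delta$ denotes the discrete Laplacian: the SSEP part and the walk part each produce a factor $\sum_y (\Delta c)_y (\xi_y-\rho)$, summed in the coefficients $1+(1-\para\xi_0)=2-\para\xi_0$. Choosing the triangular profile $c^n_x = -\tfrac{1}{2}(n-|x|)_+$ gives $(\Delta c^n)_y = \delta_{y,0} - \tfrac{1}{2}(\delta_{y,n}+\delta_{y,-n})$, so setting $\varphi_n(\xi) := \sum_{|x|\leq n} c^n_x(\xi_x-\rho)$ yields the exact identity $L\varphi_n = g - r_n$ with
\begin{equation*}
r_n(\xi) = \tfrac{1}{2}(2-\para\xi_0)\lrc{(\xi_n-\rho) + (\xi_{-n}-\rho)}
\end{equation*}
and $\|\varphi_n\|_\infty \leq n^2/2$. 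Applying Dynkin's formula to $\varphi_n$ then produces
\begin{equation*}
\int_0^t g(\xi(s))\,ds = \varphi_n(\xi(0)) - \varphi_n(\xi(t)) + M^n_t + \int_0^t r_n(\xi(s))\,ds,
\end{equation*}
where $M^n_t$ is a mean-zero martingale with bracket $\langle M^n\rangle_t = \int_0^t \Gamma\varphi_n(\xi(s))\,ds$.

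The boundary term is deterministically $O(n^2/t)$. The carré du champ $\Gamma\varphi_n$ is uniformly $O(n^2)$: its SSEP part is $O(n)$ by a telescoping computation on edge differences of $\varphi_n$, while its walk part takes the form $(1-\para\xi_0)\cdot\tfrac{1}{4}(S_+ - S_-)^2$ with $S_\pm$ sums of $n$ centered variables $\xi_y - \rho$, so pointwise bounded by $2n^2$. Hence $\EE M_t^{n,2}\le Cn^2 t$, and Chebyshev gives $\PP(|M^n_t|/t \geq \varepsilon/3) \leq Cn^2/(\varepsilon^2 t)$. The main obstacle is controlling the remainder $\int_0^t r_n(\xi(s))\,ds$: its integrand is only $O(1)$ pointwise and has no a priori reason to be centered under $\PP$, so a trivial bound loses a factor $O(t)$. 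My plan is to exploit the spatial separation between the support $\{\pm n\}$ of $r_n$ and the origin. One route is to apply the same Poisson-equation trick to $r_n$ itself, solving $L\psi \approx (2-\para\xi_0)(\xi_{\pm n}-\rho)$ with triangular profiles of width $m<n$ centered at $\pm n$ and iterating until the residual error can be absorbed; an alternative is a direct analysis of the space-time covariance of $r_n$ via the stirring (Harris) construction of the SSEP together with the diffusive growth of the walk's range. Either route should yield $\EE\lrp{\int_0^t r_n(\xi(s))\,ds}^2 \leq Cn^2 t$. Combining the three estimates by Chebyshev gives $\PP(t^{-1}|\int_0^t g(\xi(s))\,ds| \geq \varepsilon) \leq Cn^2/(\varepsilon^2 t)$, and the balanced choice $n = \lfloor t^{7/15}\rfloor$ delivers the stated rate $t^{-1/15}$.
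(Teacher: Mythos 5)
Your first step is essentially the paper's first step: you construct the triangular profile $c^n_x=-\tfrac12(n-|x|)_+$ and obtain $L\varphi_n=g-r_n$, which is exactly (up to a factor $2$) the paper's $\psi_{n,\ell}$ and its identity \eqref{6}. The identity itself, and the Dynkin decomposition, are correct. The problem lies in the estimates that follow, and in particular in a tension that you do not address.

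Your plan uses the \emph{pointwise} bounds $\|\varphi_n\|_\infty\le n^2/2$ and $\Gamma\varphi_n=O(n^2)$, which yield boundary and martingale contributions of order $n^2/t$; for these to vanish you need $n\ll t^{1/2}$, and indeed you take $n=t^{7/15}<t^{1/2}$. But the remainder $r_n$, supported at distance $n$ from the origin, can only be decoupled from the walk if $n\gg t^{1/2}$: the walk's range after time $t$ is $O(\sqrt{t})$ (Lemma~\ref{lem:rw_max}) and the SSEP propagates information diffusively, so for $n<\sqrt t$ there is no spatial separation between the support of $r_n(\xi(s))=\tfrac12(2-\lambda\xi_0(s))[(\xi_n(s)-\rho)+(\xi_{-n}(s)-\rho)]$ and the $\sigma$-field generated by the walk up to time $t$. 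This is precisely why the paper's Lateral Decoupling Lemma (Proposition~\ref{prop:lat_decoupling}) demands separation $\ge H^{\alpha'}$ with $\alpha'>1/2$, forcing $\alpha>1/2$ in \eqref{ass:ell_and_n}. Both of your sketched routes for $\EE(\int_0^t r_n)^2\le Cn^2t$ --- iterating the Poisson trick, or stirring plus walk range --- hit the same wall, and you offer no mechanism to get around it. So the proposal as stated requires $n\ll t^{1/2}$ for two of the three terms and $n\gg t^{1/2}$ for the third; these cannot both hold.

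The paper resolves this incompatibility with two ingredients you are missing. First, it does \emph{not} use pointwise bounds on $\psi_{n,\ell}(\xi(t))$ or on the walk part of the carré du champ. Instead, Lemmas~\ref{l3.3} and~\ref{l3.4} exploit the fact that, after conditioning out $X_t$ via a supremum over $|j|\le n$ and paying $\PP(|X_t|>n)$, the sums $\sum_k(n-k)(\eta_{k+j}(t)-\rho)$ and $\sum_{k=1}^n\eta_{k+j}-\sum_{k=-n+1}^0\eta_{k+j}$ are sums of i.i.d.\ centered variables, hence sub-Gaussian with scale $n^{3/2}$ and $n^{1/2}$ respectively. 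This replaces the trivial $n^2/t$ by $t^{\alpha-2/3}$ and $t^{(\alpha-1)/2}$, which vanish for $\alpha\in(1/2,2/3)$; it is exactly this refinement that lets the paper take $n=t^{0.6}>t^{1/2}$. Second, the paper needs a further intermediate replacement of $\xi_{\pm n}$ by block averages $\vec\xi^\ell_{\pm n}$ (Proposition~\ref{replancement_2}) before applying decoupling, because decoupling alone controls $\EE[(\vec\eta^\ell_{n+k}-\rho)^2]\le 1/\ell$, which vanishes, whereas a single centered site has fixed variance $\rho(1-\rho)$ and would leave an $O(1)$ term. Your one-step scheme has no analogue of this averaging, so even granting $n>t^{1/2}$ the remainder would not vanish. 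In short: the structure $L\varphi_n=g-r_n$ is right, but the quantitative estimates that make the scheme close are not the ones you have, and the parameter choice $n=t^{7/15}$ is on the wrong side of $t^{1/2}$ for the decoupling you would need.
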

 
\section{Proofs}
The key observation is that $ X_t $ is a mean-zero martingale with respect to the filtration generated by $ \lrp{X_t,\eta(t)}_{t\geq 0} $. Its predictable quadratic variation is given by the formula
\begin{equation}\label{4}
\lrb{X}_t = \int_0^t 2 - 2\para \xi_0(s)\dd s.
\end{equation}

More explicitly, we have 
\begin{equation}
     E^{\eta}\lrc{X_t^2 - \lrb{X}_t|(X_r, \eta(r)), r\leq s} = X_s^2 - \lrb{X}_s, \quad P^{\eta}-a.s.
\end{equation}
for any $t\geq s\geq 0$ and all $\eta$.

We claim that if $ \lim_{t\to\infty}t^{-1}\lrb{X}_t \to a  $ in probability, for some positive $ a>0 $, then the sequence $ \lrp{\frac{X_{nt}}{\sqrt n}:t\in [0,T]}_{n\in \bb N} $ converges in distribution to a Brownian motion of variance $a$, with respect to the $J_1$ Skorohod topology on the space $\mc D([0,T];\bb R)$. This follows from the Martingale Functional Central Limit Theorem, \cite{ethierkurtz} Theorem 7.1.4. 
Therefore we only need to prove that $ \lim_{t\to \infty}t^{-1}\int_0^t \xi_0(s)\dd s $ exists in probability. This follows from Theorem \ref{t2}, since if \eqref{eqt2} holds, then
\begin{equation}
    \lim_{t\rightarrow\infty}\frac{1}{t}\int_0^t\lrp{2-\lambda+\lambda\rho}\xi_0(s)\dd s=2\rho \mbox{ in probability},
\end{equation}
whence $ \lim_{t\to \infty}t^{-1}\int_0^t \xi_0(s)\dd s=\frac{2\rho}{2-\para+\para\rho}$.

Although in Theorem \ref{t2} the convergence holds quenched, we will prove the convergence in the annealed measure first. Our proof will yield a estimate on the rate of convergence that is strong enough that allows us to deduce the quenched convergence from it.

Before we start our proofs, we remind the readers that there are some technical lemmas that will be used throughout the proofs. Those lemmas are introduced in section 4 as well as their proofs. But we will use them in section 3 without mentioning too much in order to make the proof less tedious.
\subsection{Proof of the asymptotic limit of $\xi(t)$ under the annealed measure}
Our goal is to prove the following theorem.
\begin{theorem}\label{quenched t2}
    Under the assumptions of Theorem \ref{t1}, under the annealed measure $\PP$,
    \begin{equation}\label{qt2}
	\lim_{t\to\infty} \frac{1}{t}\int_0^t(2-\para \xi_0(s))(\xi_0(s) - \rho)\dd s = 0 \mbox{ in probability}.
	\end{equation}
\end{theorem}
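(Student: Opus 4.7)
I will apply the Kipnis--Varadhan Poisson-equation strategy: find $\varphi$ with $L\varphi\approx -g$, where $g(\xi) = (2-\para\xi_0)(\xi_0-\rho)$, and extract the time average from Dynkin's formula. For a linear-in-$\xi$ ansatz $\varphi(\xi) = \sum_x a_x(\xi_x-\rho)$, a direct computation using \eqref{key*} shows that both $L^{ssep}$ and the shift-Laplacian $f\mapsto f(\theta_1\xi)+f(\theta_{-1}\xi)-2f(\xi)$ act on $\varphi$ as the discrete Laplacian on the coefficient sequence $(a_x)$, so that
$$L\varphi(\xi) = (2-\para\xi_0)\sum_z(\Delta a)_z\xi_z,\qquad (\Delta a)_z := a_{z-1}-2a_z+a_{z+1}.$$
To make $(\Delta a)_z$ a Dirac mass at $0$ (thereby matching the $\xi_0$-part of $-g$) while keeping $\varphi$ finitely supported, I use the tent profile $a^n_x = \tfrac{n-|x|}{2}\ind{|x|\leq n}$, for which $(\Delta a^n)_z = -\delta_{z,0}+\tfrac12(\delta_{z,n}+\delta_{z,-n})$. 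Setting
$$\varphi_n(\xi) := \tfrac12\sum_{|x|\leq n}(n-|x|)(\xi_x-\rho),$$
the calculation then yields the exact identity
$$L\varphi_n(\xi) = -g(\xi) + (2-\para\xi_0)\Bigl[\tfrac12(\xi_n+\xi_{-n})-\rho\Bigr],$$
so the only error in solving the Poisson equation is localized at the two boundary sites $\pm n$.

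By Dynkin's formula, $M^n_t := \varphi_n(\xi(t))-\varphi_n(\xi(0))-\int_0^t L\varphi_n(\xi(s))\dd s$ is a mean-zero $\PP$-martingale, and rearranging the identity above gives
$$\frac1t\int_0^t g(\xi(s))\dd s = \frac{\varphi_n(\xi(0))-\varphi_n(\xi(t))}{t} + \frac{M^n_t}{t} + \frac1t\int_0^t(2-\para\xi_0(s))\Bigl[\tfrac12(\xi_n(s)+\xi_{-n}(s))-\rho\Bigr]\dd s.$$
The plan is to show each right-hand term is $o_P(1)$ under $\PP$ for a suitable choice $n=n(t)\to\infty$. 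Since $\mu$ is SSEP-invariant, $\eta(t)\sim\mu$ at every time, and a Cauchy--Schwarz / second-moment computation gives $\EE[\varphi_n(\xi(t))^2]$ of polynomial order in $n$ (roughly $n^3$), so $|\varphi_n(\xi(t))|/t\to 0$ in $\PP$-probability once $n$ grows slower than an appropriate power of $t$. A carr\'e-du-champ calculation shows $\Gamma^{ssep}(\varphi_n) = \sum_y(a^n_y-a^n_{y+1})^2(\xi_{y+1}-\xi_y)^2\leq n/2$ pointwise, while the shift contributions $[\varphi_n(\theta_{\pm 1}\xi)-\varphi_n(\xi)]^2$ are squared sums of Bernoulli-like variables whose annealed second moment is $O(n)$, yielding $\EE[\langle M^n\rangle_t]\lesssim nt$ and $M^n_t/t=O_P(\sqrt{n/t})$.

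The main obstacle is the third piece: controlling the annealed time-average of $(2-\para\xi_0(s))(\xi_{\pm n}(s)-\rho)$. At every fixed time $s$, SSEP-invariance of $\mu$ makes $\eta(s)$ a product of Bernoulli$(\rho)$ variables, so the values $\eta_{X_s}(s)$ and $\eta_{X_s\pm n}(s)$ would be independent were $X_s$ independent of $\eta$; however $X_s$ has been chosen based on the environment's history, so they are coupled. The natural intuition is that this coupling is weak once $n$ much exceeds the typical walker displacement of order $\sqrt{s}$, and turning this into a quantitative polynomial-in-$n$ bound on the time-integrated error---in particular on its fluctuations, so as to obtain convergence in probability rather than just in mean---is where the SSEP decorrelation estimates stated in Section~4 will do the work. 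Balancing the three error bounds by choosing $n=n(t)$ optimally will then produce the quantitative rate $t^{-1/15}$ claimed in Theorem~\ref{t3}, from which Theorem~\ref{quenched t2} follows by Markov's inequality.
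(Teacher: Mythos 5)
Your tent-profile ansatz and the exact identity
\begin{equation*}
L\varphi_n(\xi) = -(2-\para\xi_0)(\xi_0-\rho) + (2-\para\xi_0)\Bigl[\tfrac12(\xi_n+\xi_{-n})-\rho\Bigr]
\end{equation*}
are correct, and they match the paper's first step exactly: the paper's $\psi_{n,\ell}$ of \eqref{def:psi} equals $-2\varphi_n$, and \eqref{6} is your identity up to rescaling. Your control of the two ``outer'' terms (the additive boundary $\varphi_n(\xi(t))/t$ via a second-moment estimate, and $M^n_t/t$ via the carr\'e-du-champ) also matches Lemmas~\ref{l3.3}--\ref{l3.4}, modulo the care one has to take because $\xi(s)=\theta_{X_s}\eta(s)$ is not a product measure under the annealed law; the paper handles that via Lemma~\ref{lem:rw_max} plus a union bound over $|X_s|\leq n$, which you would need to spell out.

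The genuine gap is the last term. You propose to kill $\frac1t\int_0^t(2-\para\xi_0(s))(\xi_{\pm n}(s)-\rho)\dd s$ directly by invoking ``SSEP decorrelation estimates,'' but the only such estimate available is the lateral decoupling lemma (Proposition~\ref{prop:lat_decoupling}), and it is not strong enough to control this term as written. The route the paper uses is: Cauchy--Schwarz turns the squared time average into $\frac1t\int_0^t \bb E[\,\ldots(s)^2\,]\dd s$, and then decoupling bounds each $\bb E[(2-\para\xi_0(s))^2(\xi_n(s)-\rho)^2]$ by roughly $\bb E[(\xi_n(s)-\rho)^2]$ plus an exponentially small error. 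But $\bb E[(\xi_n(s)-\rho)^2]=\rho(1-\rho)$ is a constant — it does \emph{not} vanish as $n\to\infty$ — so the bound you get is $O(1)$, not $o(1)$. To make the decoupling bound actually small, the paper inserts an intermediate replacement step, \eqref{eq:lateral_dec_1} and Proposition~\ref{replancement_2}: it replaces $\xi_{\pm n}$ by the block averages $\vec\xi^\ell_n,\lvec\xi^\ell_{-n}$ (another Poisson equation, with the function $\varphi_{n,\ell}$ of \eqref{15}), and only \emph{then} applies decoupling to $\vec\xi^\ell_n-\rho$, whose second moment is $O(1/\ell)$. Without this spatial averaging step your third error term cannot be made small by the lemma you are relying on, so the argument as proposed does not close. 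The fix is precisely the paper's three-term decomposition \eqref{decomposition}, with $\ell\to\infty$ slowly enough that $\ell n + \ell^2 \ll t$.
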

Given $ x\in \bb Z $ and $ \ell \in \bb N $, denote
\begin{equation}\label{5}
	\vec{\xi}_x^{\ell}:=\frac{\xi_{x+1}+\cdots + \xi_{x+\ell}}{\ell},\quad  \lvec{\xi}_x^{\ell}:=\frac{\xi_{x-1}+\cdots + \xi_{x-\ell}}{\ell}.
\end{equation}

For any choice of positive integers $ \ell $ and $ n $ one can write
\begin{align}\label{decomposition}
	& \frac{1}{t}\int_0^t(2-\para \xi_0(s))(2\xi_0(s) - 2\rho)\dd s \\
	= & \frac{1}{t}\int_0^t(2-\para \xi_0(s))(2\xi_0(s) - \xi_n(s) - \xi_{-n}(s))\dd s \\
	+ & \frac{1}{t}\int_0^t(2-\para \xi_0(s))(\xi_n(s) - \vec\xi^{\ell}_n(s) + \xi_{-n}(s)-\lvec\xi^{\ell}_{-n}(s) )\dd s\\
	+ & \frac{1}{t}\int_0^t(2-\para \xi_0(s))(\vec\xi_n^{\ell}(s) + \lvec\xi_n^{\ell}(s) - 2\rho)\dd s \label{eq:lateral_dec_1}
	\end{align}
	We are going to choose $ n $ and $ \ell $ depending on $ t $ in such a way that all three integrals on the right-hand side converge to $ 0 $ in probability, as $ t\to \infty $. It turns out one can choose
	\begin{equation}\label{ass:ell_and_n}
	n = \lfloor t^{\alpha}\rfloor \mbox{ for some }\alpha \in (\frac{1}{2},\frac{2}{3}), \quad 1\ll \ell \ll \frac{t}{n}.
	\end{equation}

\begin{proposition}\label{prop:replacement_1}
	Under the assumption of Theorem \ref{t1}, assume \eqref{ass:ell_and_n}. Under the annealed measure
\begin{equation}\label{33}
\lim_{t\rightarrow\infty}\frac{1}{t}\int_0^t(2-\para\xi_0(s))(2\xi_0(s)-\xi_n(s)-\xi_{-n}(s))ds=0
\end{equation}
in probability.
\end{proposition}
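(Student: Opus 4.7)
The plan is to use the Kipnis--Varadhan strategy: find an explicit local function $\varphi_n$ solving the Poisson equation $L\varphi_n=(2-\lambda\xi_0)(2\xi_0-\xi_n-\xi_{-n})$, then apply Dynkin's formula to rewrite the time integral as a boundary term plus a martingale, and control each in $L^2(\PP)$.

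A direct computation from \eqref{key*} gives, for every $k\in\ZZ$,
\[
L\xi_k=(2-\lambda\xi_0)(\xi_{k+1}+\xi_{k-1}-2\xi_k),
\]
since the SSEP part contributes the discrete Laplacian at $k$ and the walker part contributes $(1-\lambda\xi_0)$ times the same Laplacian. Hence, for any weighted sum $\varphi=\sum_k a_k\xi_k$ of finite support, $L\varphi=(2-\lambda\xi_0)\sum_k\xi_k(\Delta a)_k$ with $(\Delta a)_k:=a_{k-1}+a_{k+1}-2a_k$. The compactly supported solution of the discrete Poisson equation $\Delta a=2\delta_0-\delta_n-\delta_{-n}$ is the tent $a_k=-(n-|k|)_+$, which yields
\[
\varphi_n(\xi):=-\sum_{|k|\leq n-1}(n-|k|)(\xi_k-\rho),\qquad L\varphi_n(\xi)=(2-\lambda\xi_0)(2\xi_0-\xi_n-\xi_{-n})
\]
(the $\rho$-centering does not change $L\varphi_n$ but improves the $L^2$ bound below). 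Dynkin's formula then gives
\[
\int_0^t(2-\lambda\xi_0(s))(2\xi_0(s)-\xi_n(s)-\xi_{-n}(s))\,ds=\varphi_n(\xi(t))-\varphi_n(\xi(0))-M_t,
\]
where $M_t$ is a mean-zero martingale with predictable bracket $\int_0^t\Gamma(\varphi_n)(\xi(s))\,ds$.

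The core estimates to establish are the annealed $L^2$ bounds $\EE[\varphi_n(\xi(t))^2]\leq Cn^3$ and $\EE[M_t^2]\leq Cnt$. Under the product measure $\mu$ both are immediate: the variance equals $\rho(1-\rho)\sum_{|k|\leq n-1}(n-|k|)^2=O(n^3)$, while $\Gamma(\varphi_n)$ splits into an SSEP piece $\sum_y(a_{y+1}-a_y)^2\mathbf{1}_{\{\xi_y\neq\xi_{y+1}\}}\leq 2n$ and a walker piece $(1-\lambda\xi_0)(\varphi_n(\theta_{\pm1}\xi)-\varphi_n(\xi))^2$ whose increment is $\sum_{k=-(n-1)}^0(\xi_k-\rho)-\sum_{k=1}^n(\xi_k-\rho)$ with $L^2_\mu$-norm $O(\sqrt n)$. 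Combined, Chebyshev yields
\[
\PP\!\left(\frac{1}{t}\Bigl|\int_0^t(2-\lambda\xi_0)(2\xi_0-\xi_n-\xi_{-n})\,ds\Bigr|>\epsilon\right)\leq \frac{C(n^3+nt)}{\epsilon^2 t^2}=O(t^{3\alpha-2})+O(t^{\alpha-1}),
\]
which vanishes as $t\to\infty$ precisely when $\alpha<2/3$, as assumed in \eqref{ass:ell_and_n}.

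The main obstacle is transferring the $\mu$-expectation bounds to the annealed measure: $\xi(s)=\theta_{X_s}\eta(s)$ is not $\mu$-distributed, since $X_s$ depends on the whole $\eta$-trajectory. Nevertheless $\eta(s)$ itself remains $\mu$-distributed because SSEP evolves autonomously of the walker and preserves $\mu$. I would invoke the technical lemmas promised in Section~4, which I expect provide a uniform-in-$s$ comparison of annealed expectations of translation-bounded local functions of $\xi(s)$ with their $\mu$-expectations; together with the translation invariance of $\mu$ this reduces both estimates above to their $\mu$-counterparts and closes the argument.
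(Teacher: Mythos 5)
Your corrector is exactly the paper's: your tent function $\varphi_n$ coincides with $\psi_{n,\ell}$ in \eqref{def:psi} (after unfolding the double sum), and your Dynkin decomposition is identical to \eqref{def:mart_psi}. Your $\mu$-variance estimates ($O(n^3)$ for the boundary term, $O(nt)$ for the bracket) are also the right orders of magnitude. So the skeleton is right.

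The gap is the last paragraph. There is no lemma in Section~4 --- and there cannot be one --- giving a "uniform-in-$s$ comparison of annealed expectations of local functions of $\xi(s)$ with their $\mu$-expectations." The measure $\mu$ is \emph{not} invariant for the environment process $\xi$; indeed the whole point of the paper (and the content of Theorem \ref{t2}) is that $\lim_t \frac{1}{t}\int_0^t \xi_0(s)\dd s = \frac{2\rho}{2-\lambda+\lambda\rho} \neq \rho$, so even $\EE[\xi_0(s)]$ does not approach $\rho$. Translation invariance of $\mu$ does not save you, because $X_s$ is correlated with $\eta(s)$: $\EE[f(\theta_{X_s}\eta(s))] = \sum_k \EE[f(\theta_k\eta(s))\,\mathbf{1}\{X_s=k\}]$ does not factor. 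What the paper actually does (Lemmas \ref{l3.3} and \ref{l3.4}) is to bound the quantity on the event $\{|X_t|\le n\}$ by $\sup_{|j|\le n}$ of the corresponding functional of $\eta(t)$ --- which \emph{is} $\mu$-distributed --- and then pay for the supremum with a union bound over $j$ plus the subgaussian concentration of Lemma \ref{lem:subgauss}; the complementary event $\{|X_t|>n\}$ is killed by the sixth-moment bound $\PP(\sup_{s\le t}|X_s|\ge n)=O(t^3/n^6)$ of Lemma \ref{lem:rw_max}. This is where the lower constraint $\alpha>\tfrac12$ in \eqref{ass:ell_and_n} comes from: with $n=\lfloor t^\alpha\rfloor$ one needs $n^2\PP(|X_t|>n)\ll t$. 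Your sketch never produces this constraint, which is a symptom that the transfer step is being treated as free when in fact it is the crux of the argument.
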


 The proof strategy is to show that the integrand is in the range of the generator and use this to rewrite the integral as the sum of a martingale and a vanishing term. The martingale is then shown to vanish too, by means of an explicit bound on its quadratic variation.

Thus we seek a function $ \psi_{n,\ell} $ such that $ L\psi_{n,\ell}(\xi)=(2-\para\xi_0)(2\xi_0-\xi_n-\xi_{-n}) $. We start the search by computing
\begin{equation}\label{eq:letax}
\begin{aligned}
L\xi_x & = \lrc{\xi_{x+1}+\xi_{x-1}-2\xi_x}\lrc{\xi_0(1-\para)+(1-\xi_0)}+\lrp{\xi_{x+1}-\xi_x}+\lrp{\xi_{x-1}-\xi_x} \\
& = (2-\para \xi_0)(\xi_{x+1}+\xi_{x-1}-2\xi_x).
\end{aligned}
\end{equation}
Let $ k > 0$. Sum from $ x=-k+1 $ to $ x = k-1 $ to get
\begin{equation}\label{eq:gradient_intherange}
L\lrp{\sum_{x=-k+1}^{k-1}\xi_x} = (2-\para \xi_0)\lrp{\xi_k - \xi_{k-1} + \xi_{-k}-\xi_{-k+1} }.
\end{equation}
Sum from $ k= 1 $ to $ k=n $ to get
\begin{equation}\label{6}
L\lrp{\sum_{k=1}^n\sum_{x=-k+1}^{k-1}\xi_x  } = (2-\para \xi_0)\lrp{-2\xi_0 + \xi_n + \xi_{-n} }.
\end{equation}
Define
\begin{equation}\label{def:psi}
\psi_{n,\ell}(\xi) := -\sum_{k=1}^n\sum_{x=-k+1}^{k-1}(\xi_x - \rho),
\end{equation}
the following process is a mean zero martingale with respect to the filtration generated by $ \xi(s)_{s\geq 0} $:
\begin{equation}\label{def:mart_psi}
M_s(\psi_{n,\ell}):= \psi_{n,\ell}(\xi(s)) - \psi_{n,\ell}(\xi(0)) - \int_0^s (2-\para\xi_0(r))(2\xi_0(r)-\xi_n(r)-\xi_{-n}(r))\dd r.
\end{equation}

We need separate arguments to control the terms $ \frac{\psi_{n,\ell}(\xi(t)) - \psi_{n,\ell}(\xi(0))}{t} $ and $ \frac{1}{t}M_t(\psi_{n,\ell}) $.
\begin{lemma} \label{l3.3}
Under the assumptions of Theorem \ref{t1}, assume \eqref{ass:ell_and_n}. With $ \psi_{n,\ell} $ given by \eqref{def:psi},
	\[ \lim_{t\to \infty} \frac{1}{t}\bb E |\psi_{n,\ell}(\xi(t))| = 0 . \]
\end{lemma}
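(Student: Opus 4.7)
The plan is a Cauchy--Schwarz reduction to a uniform-in-$t$ second moment bound. Specifically, $\EE|\psi_{n,\ell}(\xi(t))| \leq \sqrt{\EE[\psi_{n,\ell}(\xi(t))^2]}$, so it suffices to prove
\[
\EE\lrc{\psi_{n,\ell}(\xi(t))^2} = O(n^3) \quad \text{uniformly in } t\geq 0;
\]
granted this bound, $t^{-1}\EE|\psi_{n,\ell}(\xi(t))| \leq Cn^{3/2}/t = Ct^{3\alpha/2-1}$, which vanishes as $t\to\infty$ since $\alpha < 2/3$ by \eqref{ass:ell_and_n}. Note that at $t=0$ the bound is a direct variance computation on the product measure: $\EE_{\mu}[\psi_{n,\ell}^2] = \rho(1-\rho)\sum_x(n-|x|)^2 = O(n^3)$, so the content is that this $L^2$ estimate survives as $t$ evolves.

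To prove the uniform $L^2$ bound I would use the graphical (``stirring'') construction of the SSEP. In this construction there is a random permutation $\sigma_t$ of $\ZZ$, generated by the rate-$1$ Poisson clocks attached to bonds and independent of $\eta_0\sim\mu$, such that $\eta_t(z) = \eta_0(\sigma_t^{-1}(z))$ for every $z\in\ZZ$. Substituting into \eqref{def:psi} yields
\[
\psi_{n,\ell}(\xi(t)) = -\sum_{x=-(n-1)}^{n-1}(n-|x|)\lrp{\eta_0(\sigma_t^{-1}(X_t+x)) - \rho},
\]
which is a weighted linear combination of $\eta_0$ at the $2n-1$ pairwise distinct sites $\sigma_t^{-1}(X_t+x)$ (distinct because $\sigma_t^{-1}$ is a bijection). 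If the walker's position $X_t$ were independent of $\eta_0$, conditioning on $(\sigma_t,X_t)$ and using that $\eta_0$ is i.i.d.\ Bernoulli$(\rho)$ would give $\EE[\psi_{n,\ell}^2(\xi(t))\mid\sigma_t,X_t] = \rho(1-\rho)\sum_x(n-|x|)^2 = O(n^3)$ immediately.

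The real difficulty is that the walker does depend on $\eta_0$, because its jump rates contain the factor $1-\para\xi_0 = 1-\para\eta_0(\sigma_s^{-1}(X_s))$. I would handle this by conditioning additionally on the walker's external randomness (maximal-rate Poisson attempt clocks, independent uniform direction choices, and uniform thinning variables) together with the restriction $\eta_0|_V$ of $\eta_0$ to the visited set $V=\{\sigma_s^{-1}(X_s):s\in[0,t]\}$ of time-$0$ sites that were ever at the walker's position. Under this augmented conditioning the entire walker trajectory is determined, so $X_t$, $\sigma_t^{-1}$ and $V$ are fixed, while $\eta_0|_{V^c}$ remains i.i.d.\ Bernoulli$(\rho)$. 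Splitting the sum according to whether $\sigma_t^{-1}(X_t+x)\in V^c$ or $\in V$, the $V^c$-part has conditional mean zero and conditional variance at most $\rho(1-\rho)\sum_{x}(n-|x|)^2 = O(n^3)$.

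The main obstacle is the $V$-contribution. Because $|V|=O(\sqrt t)$ and $|S|=2n-1$ where $S = \{\sigma_t^{-1}(X_t+x)\}_x$, for $\alpha>1/2$ one can have $|S\cap V|$ as large as $O(\sqrt t)$, and a pointwise bound on the $V$-piece gives only $O(n^2 t)$ for its square, which is too crude. The fix is to integrate against the joint law of $(V,\eta_0|_V)$: even though $\eta_0|_V$ is biased by being consistent with the walker's trajectory, it still consists of independent Bernoulli variables whose distribution is only perturbed through the thinning mechanism, and this extra averaging is precisely what the technical lemmas announced for Section~4 are designed to furnish. Applying them should produce $\EE[(\text{$V$-piece})^2]=O(n^3)$ (this reduces to an $O(n^2\sqrt t)$ estimate, which is $\leq O(n^3)$ for $\alpha \geq 1/2$), yielding the desired uniform $O(n^3)$ bound and completing the proof.
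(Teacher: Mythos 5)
Your Cauchy--Schwarz reduction is sound in principle: if $\EE[\psi_{n,\ell}(\xi(t))^2]=O(n^3)$ uniformly in $t$, the lemma follows from $\alpha<2/3$. However, your proof of that $L^2$ bound has a genuine gap in the $V$-contribution, and the route is considerably more involved than the paper's. The paper never passes through the graphical construction: after rewriting $\psi_{n,\ell}(\xi)=n(\xi_0-\rho)+\sum_{k=1}^n(n-k)(\xi_k+\xi_{-k}-2\rho)$, it splits on the event $\{|X_t|>n\}$ (handled by Doob's inequality and Poisson moment bounds, Lemma~\ref{lem:rw_max}), and on the complement it observes that, pointwise, $|\sum_k(n-k)(\xi_k(t)-\rho)|\le\sup_{|j|\le n}|\sum_k(n-k)(\eta_{k+j}(t)-\rho)|$. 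Each quantity inside the sup is a weighted sum of i.i.d.\ centered Bernoulli variables (since the product measure is invariant for SSEP, so $\eta(t)$ at a fixed time $t$ is i.i.d.), so Hoeffding's inequality (Lemma~\ref{lem:subgauss}) plus a union bound over $2n+1$ values of $j$ and an integration-by-parts argument finish the job. This sidesteps the dependence between $X_t$ and $\eta$ entirely; no conditioning on visited labels is needed.

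The difficulty you correctly identify --- that $X_t$ and $\eta_0$ are not independent, so conditioning on $(\sigma_t,X_t)$ is illegitimate --- is not resolved by your sketch. Under your augmented conditioning, $\eta_0|_V$ is not a product of Bernoulli$(\rho)$ variables: each observed label $v\in V$ is either forced to $0$, forced to $1$, or still Bernoulli$(\rho)$, depending on whether and how the walker's jump-acceptance uniform interacted with $\eta_0(v)$, and the pattern of forced values is tightly coupled to the conditioned trajectory. So the $V$-piece of $\psi_{n,\ell}$ can have a systematic bias of order $n\cdot|S\cap V|$ rather than fluctuations of order $n\sqrt{|S\cap V|}$; establishing the cancellation you need is a substantive problem that you leave unaddressed. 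Your statement that ``the technical lemmas announced for Section~4 are designed to furnish'' this estimate is not correct: Section~4 contains only Hoeffding's inequality, a Gaussian tail bound, a sixth-moment bound on $\sup_{s\le t}|X_s|$, and the lateral decoupling proposition from \cite{hkt}, none of which bears on the conditional law of $\eta_0|_V$ in the stirring construction. As written, the proposal replaces the paper's elementary argument with a harder one and does not close the central estimate.
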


\begin{proof}
	Rewrite $ \psi_{n,\ell}(\xi) = n(\xi_0 - \rho) + \sum_{k=1}^n (n-k)(\xi_k + \xi_{-k}-2\rho) $. It suffices to prove
	\begin{equation}\label{eq:bounding_psi}
	\lim_{t\to \infty}\frac{1}{t}\bb E\Big|\sum_{k=1}^n (n-k)(\xi_k(t) - \rho) \Big| = 0.
	\end{equation}
	Notice that the trivial pointwise bound is of order $ n^2 $, which is much bigger than $ t $. The idea is that when $ k $ is large the variables $ \xi_k(t)-\rho $ are approximately independent and have mean zero. Recall that $ \xi_x(t) = \eta_{x+X_t}(t) $, where $ \eta(t) $ is a stationary SSEP and $ X_t $ is the random walk. Then
	
	\begin{align}
	\bb E \Big| \sum_{k=1}^n (n-k)(\xi_k(t) - \rho) \Big|
	& \leq n^2\bb P\lrp{|X_t| > n}
	+ \bb E\Big|\sup_{|j|\leq n} \sum_{k=1}^n (n-k)\lrp{\eta_{k+j}(t)-\rho} \Big|.
	\end{align}
	By Lemma \ref{lem:rw_max}, the first term is of order $ t^3n^{-4} $. It then follows from our assumption \eqref{ass:ell_and_n} that $ \lim_{t\to\infty}t^{-1}n^2\bb P\lrp{|X_t| > n} =0 $, as we need.
	
	To bound the second term, write
	
	\begin{equation}
	\begin{aligned}
	& \frac{1}{t}\bb E\Big|\sup_{|j|\leq n} \sum_{k=1}^n (n-k)\lrp{\eta_{k+j}(t)-\rho} \Big| \\
	 = &\int_0^{\infty} \bb P\lrp{\Big|\sup_{|j|\leq n} \sum_{k=1}^n (n-k)\lrp{\eta_{k+j}(t)-\rho} \Big| > \beta t} \dd \beta \\
	 \leq &\delta + \sum_{|j|\leq n}\int_\delta^{\infty} \bb P\lrp{\Big|\sum_{k=1}^n (n-k)\lrp{\eta_{k+j}(t)-\rho} \Big| > \beta t} \dd \beta \\
	 \leq &
	\delta + 2\sum_{|j|\leq n}\int_\delta^{\infty} \bb \exp\lrp{-\frac{t^2}{n^3}\frac{\beta^2}{2}} \dd \beta \\
	 \leq & \delta + 12\frac{n^{\frac{5}{2}}}{t}\cdot\exp\lrp{-\frac{t^2}{n^3}\frac{\delta^2}{2}} \\
	 =& \delta + 12\, t^{\frac{5\alpha}{2}-1}\cdot\exp\lrp{-\frac{\delta^2 t^{2-3\alpha}}{2}}.
	\end{aligned}
	\end{equation}
	The fourth line is by Lemma \ref{lem:subgauss}, the fifth line is by lemma \ref{eq:gauss_tail}, and the last line is by \eqref{ass:ell_and_n}.
	
	Now choose $ \delta = t^{-(\frac{2}{3}-\alpha)} $, we get an upper bound of $\bb E \Big| \sum_{k=1}^n (n-k)(\xi_k(t) - \rho) \Big|$ as
	\begin{equation}\label{aa}
	    \frac{1}{t}\bb E \Big| \sum_{k=1}^n (n-k)(\xi_k(t) - \rho) \Big|\leq c_0\left( t^{2-4\alpha}+t^{\alpha-\frac{2}{3}}\right)
	\end{equation}
	for some constant $c_0>0$ and $t$ large enough. Let $ t\to \infty $, the right hand side converges to zero, this finishes the proof of \eqref{eq:bounding_psi}.
\end{proof}

The next lemma controls $\frac{1}{t}M_t(\psi_{n,\ell})$.

\begin{lemma} \label{l3.4}
Under the assumptions of Theorem \ref{t1}, assume \eqref{ass:ell_and_n}. With $ \psi_{n,\ell} $ given by \eqref{def:psi} and $ M_t(\psi_{n,\ell}) $ given by \eqref{def:mart_psi},
	\begin{equation}\label{7}
	\lim_{t\to \infty} t^{-2} \bb E\lrc{M_t^2(\psi_{n,\ell})} = 0.
	\end{equation}
\end{lemma}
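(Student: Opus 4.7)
The plan is to invoke Dynkin's identity
\[
\bb E[M_t^2(\psi_{n,\ell})] \;=\; \bb E\int_0^t \Gamma\psi_{n,\ell}(\xi(s))\dd s,\qquad \Gamma f(\xi) := L(f^2)(\xi) - 2f(\xi) Lf(\xi),
\]
valid because $L$ is a pure jump generator, so that $\Gamma\psi_{n,\ell}$ decomposes as a sum of squared increments along the SSEP bonds and along the two shifts $\theta_{\pm 1}$ coming from \eqref{key*}. Rewriting $\psi_{n,\ell}(\xi) = -\sum_{x=-(n-1)}^{n-1} a_x(\xi_x - \rho)$ with the tent coefficients $a_x := (n-|x|)_+$, the strategy is to bound each piece of $\Gamma\psi_{n,\ell}$ in expectation uniformly in $s$.

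For the SSEP piece, swapping at the bond $(y,y+1)$ gives $\psi_{n,\ell}(\xi^{y,y+1}) - \psi_{n,\ell}(\xi) = (a_{y+1}-a_y)(\xi_y - \xi_{y+1})$, and since $|a_{y+1}-a_y|\le 1$ and vanishes for $|y|\ge n$, the SSEP contribution to $\Gamma\psi_{n,\ell}$ is dominated pointwise by $Cn$. For the random walk piece, summation by parts yields
\[
\psi_{n,\ell}(\theta_1\xi) - \psi_{n,\ell}(\xi) \;=\; \sum_{x=-(n-1)}^{0}(\xi_x - \rho) \;-\; \sum_{x=1}^{n}(\xi_x - \rho),
\]
with the analogous expression for $\theta_{-1}\xi$, and the rate factor $(1-\para)\xi_0 + (1-\xi_0)$ is bounded by $1$.

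The main obstacle is that the pointwise bound $O(n^2)$ for the random walk piece is too weak: it would give $\bb E[M_t^2] \le Cn^2 t$, and $n^2/t = t^{2\alpha-1}$ does not vanish once $\alpha > 1/2$. To gain the missing factor of $n$ I would pass to expectation under the annealed measure $\bb P$, exploiting that $\xi(s)$ has marginal law $\mu$ for every $s\ge 0$: this follows from invariance of $\mu$ under the SSEP together with translation invariance, since $\xi_x(s) = \eta_{x+X_s}(s)$. Under $\mu$ the two sums above are independent sums of $n$ centered Bernoulli$(\rho)$ variables, each of variance $n\rho(1-\rho)$, so $\bb E[\Gamma^{rw}\psi_{n,\ell}(\xi(s))] \le C(\rho,\para)\,n$ uniformly in $s$. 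Combining with the SSEP bound, $\bb E[M_t^2(\psi_{n,\ell})] \le C n t$, and therefore by \eqref{ass:ell_and_n},
\[
t^{-2}\bb E[M_t^2(\psi_{n,\ell})] \;\le\; C n / t \;=\; C\,t^{\alpha-1} \to 0 \qquad (t\to\infty),
\]
which yields \eqref{7}.
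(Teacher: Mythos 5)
Your decomposition of $\bb E[M_t^2]$ and your treatment of the SSEP piece match the paper's argument, and you correctly identify that the obstruction is the random walk piece, whose pointwise bound $O(n^2)$ is too weak. However, the mechanism you use to gain a factor of $n$ is wrong.

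You claim that $\xi(s) = \theta_{X_s}\eta(s)$ has marginal law $\mu$ for every $s$, ``by invariance of $\mu$ under the SSEP together with translation invariance.'' This does not follow: translation invariance of $\mu$ gives $\theta_k\eta(s)\sim\mu$ only for $k$ that is deterministic or independent of $\eta(s)$, but $X_s$ is a functional of the trajectory $(\eta(r))_{r\le s}$ and is correlated with $\eta(s)$. In fact your assertion would say precisely that $\mu$ is invariant for the generator $L$ of the environment-as-seen-from-the-walk process, which is false here. Taking $f(\xi)=\xi_0$ and using \eqref{eq:letax},
\begin{equation*}
\int L\xi_0\dd\mu = \int (2-\para\xi_0)(\xi_1+\xi_{-1}-2\xi_0)\dd\mu = -\para\bigl(2\rho^2-2\rho\bigr) = 2\para\rho(1-\rho)\neq 0
\end{equation*}
for $\para\ne 0$ and $\rho\in(0,1)$, so $\mu$ is not invariant and $\xi(s)\not\sim\mu$ for $s>0$. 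Note that avoiding exactly this kind of assertion --- the existence of an absolutely continuous invariant measure for the environment seen from the walk --- is the stated point of the paper's method. The paper instead gets the missing factor of $n$ by conditioning: it splits on $\{|X_s|\le n\}$ (the complement has probability $O(t^3 n^{-6})$ by Lemma \ref{lem:rw_max}), writes $\xi_k(s)=\eta_{k+X_s}(s)$ on that event, replaces $X_s$ by a supremum over shifts $|j|\le n$, and then applies the sub-Gaussian concentration bound of Lemma \ref{lem:subgauss} to the genuinely i.i.d. Bernoulli$(\rho)$ variables $\eta_\cdot(s)$. That supremum-over-shifts step is exactly what replaces the (false) claim that $\xi(s)$ has law $\mu$, and it is the missing idea in your proposal.
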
	

\begin{proof}
	There is an explicit formula for the predictable quadratic variation of $ M_t(\psi_{n,\ell}) $:
	\begin{align}
	\lrb{M_{\cdot}(x)}_t
	& = \int_0^t \sum_{x\in \bb Z} \lrc{\psi_{n,\ell}\lrp{\xi^{x,x+1}(s)} - \psi_{n,\ell}\lrp{\xi(s)} }^2 \dd s \\ \label{eq:quadvar_psi_ex}
	& + \int_0^t (1-\para \xi_0(s))\lrc{\psi_{n,\ell}\lrp{\theta_1\xi(s)} - \psi_{n,\ell}\lrp{\xi(s)} }^2 \dd s \\ \label{eq:quadvar_psi_rw}
	& + \int_0^t (1-\para \xi_0(s))\lrc{\psi_{n,\ell}\lrp{\theta_{-1}\xi(s)} - \psi_{n,\ell}\lrp{\xi(s)} }^2 \dd s.
	\end{align}
	Our goal is to prove $ \lim_{t\to \infty}t^{-2}\bb E\lrb{M_{\cdot}(\psi_{n,\ell})}_t = 0 $.  To bound the first term, notice that $ \lrc{\psi_{n,\ell}\lrp{\xi^{x,x+1}} - \psi_{n,\ell}\lrp{\xi} }^2 = 0 $ if $ |x|>n $ and no greater than $  1 $ if $ |x|\leq n $, so the integrand is much smaller than $ 2tn $. The second term demands more work while the third term has the similar proof as the second one. To start, we compute
	\begin{equation}\label{8}
	-\psi_{n,\ell}(\theta_1\xi)+\psi_{n,\ell}(\xi) = \sum_{k=1}^{n}\xi_k-\sum_{k=-n+1}^0\xi_k.
	\end{equation}
	It is enough to prove
	\begin{equation}\label{eq:bound_quadvar_rw}
	\lim_{t\to \infty}\sup_{s\leq t}t^{-1}\bb E\lrc{\lrp{\sum_{k=1}^{n}\xi_k(s)-\sum_{k=-n+1}^0\xi_k(s)}^2} = 0.
	\end{equation}
	The expectation above is small by the same reason that \eqref{eq:bounding_psi} is small: the random variables $ \xi_k(s) $, for large $ k $, are approximately independent of mean $ \rho $. We follow the same method of proof.
	\begin{equation}\label{9}
	\begin{aligned}
	& t^{-1}\bb E\lrc{\lrp{\sum_{k=1}^{n}\xi_k(s)-\sum_{k=-n+1}^0\xi_k(s)}^2} \\
	& \leq \frac{n^2}{t}\bb P\lrp{|X_s|> n } + t^{-1}\bb E\lrc{\sup_{|j|\leq n}\lrp{\sum_{k=1}^{n}\eta_{k+j}(s)-\sum_{k=-n+1}^0\eta_{k+j}(s)}^2 }
	\end{aligned}
	\end{equation}
	By Lemma \ref{lem:rw_max}, the first term is of order $\frac{t^2}{n^4} $, so it vanishes as $ t\to \infty $. The second term is bounded, for any $ \delta >0 $, by
	\begin{equation}\label{10}
	\begin{aligned}
	& \delta + \sum_{|j|\leq n}\int_\delta^\infty \bb P\lrp{\lrc{\sum_{k=1}^{n}\eta_{k+j}(s)-\sum_{k=-n+1}^0\eta_{k+j}(s)}^2 \geq \beta t}\dd \beta \\
	\leq & \,\delta + 2\sum_{|j|\leq n}\int_\delta^\infty \exp\lrp{-\frac{\beta t}{10n }}\dd \beta  \\
	\leq & \, \delta + \frac{60 n^2}{t}\exp\lrp{-\frac{\delta t}{10n}} \\
	 = &\,\delta + 60 t^{2\alpha - 1}\exp\lrp{-\frac{\delta t^{1-\alpha}}{10}} .
	\end{aligned}
	\end{equation}
The second line is by Lemma \ref{lem:subgauss}. Choose $ \delta =t^{-\frac{1-\alpha}{2}} $, we then get an upper bound 
\begin{equation}
    t^{-1}\bb E\lrc{\lrp{\sum_{k=1}^{n}\xi_k(s)-\sum_{k=-n+1}^0\xi_k(s)}^2}\leq c_1\left(\frac{t^2}{n^4}+t^{\frac{\alpha-1}{2}}\right)
\end{equation}
for some constant $c_1>0$ and $t$ large enough.

Collect all the above upper bounds we have 
\begin{equation}\label{bb}
    \frac{1}{t^2}\bb E\lrc{M_t^2(\psi_{n,\ell})}\leq \frac{2n}{t}+c_1\left(\frac{t^2}{n^4}+t^{\frac{\alpha-1}{2}}\right)=2t^{\alpha-1}+c_1(t^{2-4\alpha}+t^{\frac{\alpha-1}{2}}).
\end{equation}
By the assumption \ref{ass:ell_and_n}, the upper bound vanishes as $t\rightarrow\infty$.
%
	
\end{proof}

\begin{proof}[Proof of Proposition \ref{prop:replacement_1}] By Chebyshev inequality, for any $\epsilon>0$, notice that $\xi(0)=\eta(0)$, there exists some constant $c_2>0$ such that
\begin{equation}\label{c}
    \begin{aligned}
    \PP\lrc{\Big|\frac{\psi_{n,\ell}(\xi(0))}{t}\Big|\geq \epsilon}\leq\frac{\EE\lrc{\psi_{n,\ell}^2(\xi(0))}}{\epsilon^2 t^2}
    =\frac{\EE\lrc{\lrp{\sum_{k=1}^n (n-k)(\eta_k(0) - \rho)}^2}}{\epsilon^2t^2}
    \leq \frac{c_2}{\epsilon^2}t^{2\alpha-2}.
    \end{aligned}
\end{equation}
The last inequality uses the fact that $\{\eta_k(0)-\rho\}_{k\in \ZZ}$ is an i.i.d mean zero sequence. The cross terms above will vanish after taking the expectation.

Use this upper bound, together with \eqref{def:mart_psi}, \eqref{aa}, and \eqref{bb} for any $\epsilon>0$,
\begin{equation}\label{key_1}
    \PP\lrc{\frac{1}{t}\Big|\int_0^t(2-\para\xi_0(s))(2\xi_0(s)-\xi_n(s)-\xi_{-n}(s))\dd s\Big|\geq \epsilon}\leq C_0(\epsilon) t^{\gamma_1}
\end{equation}
where constant $C_0(\epsilon)>0$ and 
\begin{equation}
    \gamma_1=\max\left\{2\alpha-2,\alpha-1,\frac{\alpha-1}{2},2-4\alpha,\alpha-\frac{2}{3}\right\}<0
\end{equation}
due to assumption \eqref{ass:ell_and_n}. Hence Proposition \ref{prop:replacement_1} is proved.
\end{proof}

The next proposition shows the limit of the second part of the decomposition \eqref{decomposition}. 
\begin{proposition}\label{replancement_2}
	Under the assumption of Theorem \ref{t1}, assume \eqref{ass:ell_and_n}. Under the annealed measure,
	\begin{equation}\label{lem:replacement_2}
	\lim_{t\to \infty}\frac{1}{t}\int_0^t(2-\para \xi_0(s))(\xi_n(s) - \vec\xi^{\ell}_n(s) + \xi_{-n}(s)-\lvec\xi^{\ell}_{-n}(s) )\dd s = 0
	\end{equation}
	in probability.
\end{proposition}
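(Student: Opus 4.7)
My plan is to follow the same martingale-decomposition template used for Proposition~\ref{prop:replacement_1}. I look for an explicit local function $\varphi_{n,\ell}$ satisfying the Poisson-type identity
\begin{equation*}
L\varphi_{n,\ell}(\xi) = (2-\para\xi_0)\bigl(\xi_n - \vec\xi_n^{\ell} + \xi_{-n} - \lvec\xi_{-n}^{\ell}\bigr).
\end{equation*}
Once $\varphi_{n,\ell}$ is constructed, Dynkin's formula produces the mean-zero martingale
\begin{equation*}
M_t(\varphi_{n,\ell}) := \varphi_{n,\ell}(\xi(t)) - \varphi_{n,\ell}(\xi(0)) - \int_0^t L\varphi_{n,\ell}(\xi(s))\dd s,
\end{equation*}
so that the integral in \eqref{lem:replacement_2} equals $\varphi_{n,\ell}(\xi(t))-\varphi_{n,\ell}(\xi(0))-M_t(\varphi_{n,\ell})$. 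Chebyshev's inequality then reduces matters to bounding $t^{-1}\EE|\varphi_{n,\ell}(\xi(t))|$, $t^{-1}\EE|\varphi_{n,\ell}(\xi(0))|$, and $t^{-2}\EE\langle M(\varphi_{n,\ell})\rangle_t$, exactly as in the proof of Proposition~\ref{prop:replacement_1}.

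For the construction, I use the ansatz $\varphi_{n,\ell}(\xi)=\sum_{x\in\bb Z} c_x(\xi_x-\rho)$. Applying \eqref{eq:letax} and discrete summation by parts gives $L\varphi_{n,\ell}(\xi) = (2-\para\xi_0)\sum_x(c_{x+1}+c_{x-1}-2c_x)\xi_x$, so I need to solve the discrete Poisson equation with right-hand side $d_{\pm n}=1$, $d_{\pm(n+j)}=-1/\ell$ for $1\leq j\leq \ell$, and $d_x=0$ otherwise. Since $\sum_x d_x=0$, this has the compactly supported even solution $c_x = \tfrac{1}{2}\sum_y|x-y|d_y$, which works out to $c_x=-(\ell+1)/2$ for $|x|\leq n$, $c_{\pm(n+m)}=-(\ell+1)/2+m-m(m-1)/(2\ell)$ for $0\leq m\leq \ell+1$, and $c_x=0$ for $|x|>n+\ell$. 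Three properties of $c$ are what I will use downstream: (i) $|c_x|\leq (\ell+1)/2$ with support in $[-(n+\ell),n+\ell]$; (ii) $|c_{x+1}-c_x|\leq 1$, with the gradient supported on $2\ell$ sites clustered near $\pm n$; (iii) $\sum_x (c_{x+1}-c_x)^2 = O(\ell)$.

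The analogue of Lemma~\ref{l3.3} is $t^{-1}\EE|\varphi_{n,\ell}(\xi(t))|\to 0$. The crude pointwise estimate $|\varphi_{n,\ell}(\xi)|\leq \sum_x|c_x|=O(n\ell+\ell^2)$ already suffices: with $n=\lfloor t^\alpha\rfloor$ and $\ell\ll t^{1-\alpha}$ from \eqref{ass:ell_and_n}, the right-hand side is $o(t)$; the term at $s=0$ is handled identically. The analogue of Lemma~\ref{l3.4} is $t^{-2}\EE\langle M(\varphi_{n,\ell})\rangle_t\to 0$. The SSEP contribution to the quadratic variation is $\int_0^t \sum_x (c_x-c_{x+1})^2(\xi_x(s)-\xi_{x+1}(s))^2\dd s \leq t\sum_x(c_{x+1}-c_x)^2=O(\ell t)$ by property (iii). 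For the random walk jumps, summation by parts yields $\varphi_{n,\ell}(\theta_{\pm 1}\xi)-\varphi_{n,\ell}(\xi) = \sum_x(c_{x\mp 1}-c_x)\xi_x$, bounded uniformly by $\sum_x|c_{x+1}-c_x|=O(\ell)$; squaring and integrating in time gives $O(\ell^2 t)$. Altogether $t^{-2}\EE\langle M\rangle_t=O(\ell^2/t)=o(1)$, because $\alpha>1/2$ and $\ell\ll t^{1-\alpha}$ force $\ell^2\ll t^{2(1-\alpha)}=o(t)$.

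The main obstacle is choosing $c$ so that it is simultaneously compactly supported, uniformly bounded by $O(\ell)$ (rather than $O(n)$), and of small Dirichlet form. The piecewise-linear profile above accomplishes all three, and the requirement $\ell^2/t\to 0$ coming from the quadratic variation is exactly what pins down the lower endpoint $\alpha>1/2$ in \eqref{ass:ell_and_n}. Once $\varphi_{n,\ell}$ is in hand, the remaining bookkeeping mirrors that of Proposition~\ref{prop:replacement_1} and will yield a quantitative polynomial rate of convergence of the same form as \eqref{key_1}.
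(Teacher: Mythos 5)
Your proposal is correct and follows essentially the same route as the paper: construct a linear function $\varphi_{n,\ell}$ whose image under $L$ is the integrand, apply Dynkin's formula, and bound the boundary terms pointwise by $O(n\ell+\ell^2)$ and the quadratic variation by $O(\ell^2 t)$, exactly as in \eqref{a}, \eqref{b}. The only difference is cosmetic: the paper obtains $\varphi_{n,\ell}$ by telescoping \eqref{eq:gradient_intherange}, while you derive the same coefficients (up to sign and the harmless $\rho$-centering) as the compactly supported Green's-function solution $c_x=\tfrac12\sum_y|x-y|d_y$ of the discrete Poisson equation, which is a clean reformulation but not a new argument.
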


\begin{proof}We show that the integrand is in the range of the generator and split the integral into a martingale term plus a vanishing term.
Notice that
\begin{equation}\label{12}
\xi_x - \vec\xi_{x}^{\ell} = \sum_{j=0}^{\ell - 1}\frac{\ell - j}{\ell}(\xi_{x+j}-\xi_{x+j+1})
\end{equation}
and
\begin{equation}\label{13}
\xi_x - \lvec\xi_{x}^{\ell} = \sum_{j=0}^{\ell - 1}\frac{\ell - j}{\ell}(\xi_{x-j}-\xi_{x-j-1}).
\end{equation}

From \eqref{eq:gradient_intherange}, we get

\begin{equation}\label{14}
(2-\para\xi_0)\lrp{\xi_n - \vec\xi_n^{\ell}+\xi_{-n}-\lvec\xi_{-n}^{\ell} } =
-L\varphi_{n,\ell}(\xi),
\end{equation}
where
\begin{equation}\label{15}
\varphi_{n,\ell}(\xi) := \sum_{j=0}^{\ell -1}\frac{\ell - j}{\ell}\sum_{x=-n-j}^{n+j}\xi_x.
\end{equation}

The process

\begin{align}\label{mart-rep-2}
M_s(\varphi_{n,\ell})  := \varphi_{n,\ell}(\xi(s)) - \varphi_{n,\ell}(\xi(0))
 - \int_0^s L\varphi_{n,\ell}(\xi(r)) \dd r
\end{align}
is a martingale with respect to the filtration generated by $ \lrp{\xi(s)}_{s\geq 0} $. To prove \eqref{lem:replacement_2}, we show that $ |\varphi_{n,\ell}| \ll t $ and $ \lrb{M_{\cdot}(\varphi_{n,\ell})}_t \ll t^2 $. For the first term,
\begin{equation}\label{a}
|\varphi_{n,\ell}(\xi)| \leq \sum_{j=0}^{\ell -1}\frac{\ell - j}{\ell}\lrp{2n + 2j +1} \leq C\lrp{\ell n + \ell^2}
\end{equation}
for some $ C>0 $, so it follows from \eqref{ass:ell_and_n} that $ \lim_{t\to \infty}t^{-1}|\varphi_{n,\ell}(\xi)| = 0 $ for any $ \xi \in \{0,1\}^{\bb Z} $.

It remains to prove that $ t^{-1}M_t(\varphi_{n,\ell}) \to 0$ in probability. We prove this by controlling the second moment of $ M_t(\varphi_{n,\ell}) $ through its predictable quadratic variation
\begin{equation}\label{16}
\begin{aligned}
\lrb{M_{\cdot}(\varphi_{n,\ell})}_t & = \int_0^t \sum_{x\in \bb Z}\lrc{\varphi_{n,\ell}\lrp{\xi^{x,x+1}(s)} - \varphi_{n,\ell}\lrp{\xi}(s)}^2\dd s + \\
& + \int_0^t \lrp{1-\para\xi_0(s)}\lrc{\varphi_{n,\ell}\lrp{\theta_1\xi(s)} - \varphi_{n,\ell}\lrp{\xi(s)}}^2 \dd s \\
& + \int_0^t \lrp{1-\para\xi_0(s)}\lrc{\varphi_{n,\ell}\lrp{\theta_{-1}\xi(s)} - \varphi_{n,\ell}\lrp{\xi(s)}}^2 \dd s.
\end{aligned}
\end{equation}

We claim that, 

\begin{equation}\label{eq:replacement_2_quadvar}
\lim_{t\to \infty}t^{-2} \lrb{M_{\cdot}(\varphi_{n,\ell})}_t = 0 .
\end{equation}
Let $ a_k := \sum_{j=k}^{\ell-1}\frac{\ell - j}{\ell} $. Then

\begin{equation}\label{17}
\begin{aligned}
\varphi_{n,\ell}(\xi) & = a_0\sum_{j=-n}^{n}\xi_j + \sum_{k=1}^{\ell - 1}a_k\lrp{\xi_{n+k}+\xi_{-n-k}}.
\end{aligned}
\end{equation}
It's easy to see that
\begin{equation}\label{18}
\lrc{\varphi_{n,\ell}\lrp{\xi^{x,x+1}} - \varphi_{n,\ell}\lrp{\xi}}^2 \leq \sum_{k=0}^{\ell -1} \Ind{|x|=n+k}(a_k - a_{k+1})^2 \leq \ell
\end{equation}
and
\begin{equation}\label{19}
\lrc{\varphi_{n,\ell}\lrp{\theta_1\xi} - \varphi_{n,\ell}\lrp{\xi}}^2 \leq (2a_0)^2 \leq C\ell^2
\end{equation}
for some $ C>0 $ independent of $ \ell $ and $ n $.

These bounds imply 
\begin{equation}\label{b}
     \lrb{M_{\cdot}(\varphi_{n,\ell})}_t \leq  3\lrp{tl + Ct\ell^2}.
\end{equation}
Hence, by \eqref{mart-rep-2}, \eqref{a}, \eqref{b}, for any $\epsilon>0$,
\begin{equation}\label{key_2}
    \PP\lrc{\frac{1}{t}\Big|\int_0^t(2-\para \xi_0(s))(\xi_n(s) - \vec\xi^{\ell}_n(s) + \xi_{-n}(s)-\lvec\xi^{\ell}_{-n}(s) )\dd s\Big|\geq \epsilon}\leq C_1(\epsilon) \frac{l+ln+l^2}{t}
\end{equation}
for some $C_1(\epsilon)>0$. By \eqref{ass:ell_and_n} the right hand side of \eqref{key_2} indeed converges to zero.
\end{proof}
\begin{remark}
Proposition \ref{replancement_2} gives the convergence under the annealed measure. But one can see from the key upper bounds \eqref{a} and \eqref{b} are deterministic. This implies that the convergence holds not only in the annealed sense, but also in the quenched sense, i.e. under $P^\eta$ for all $\eta\in [0,1]^\NN\times \RR^+$.
\end{remark}
\begin{proposition}\label{ld}
Under the assupmtion of Theorem \ref{t1}, assume \eqref{ass:ell_and_n}. Under the annealed measure
	\begin{equation}\label{20}
	\frac{1}{t}\int_0^t (2-\para \xi_0(s))(\vec\xi^{\ell}_n(s)-\rho)\dd s \to 0
	\end{equation}
	in probability as $ t\to \infty $. The same holds if $ \vec\xi^{\ell}_n $ is replaced by $ \lvec\xi^{\ell}_{-n} $.
\end{proposition}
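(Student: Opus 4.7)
The plan is to bound the annealed first moment of the integrand and conclude by Markov's inequality. Since $|2-\para\xi_0(s)|\leq 2$ pointwise, the claim reduces to showing
\[
\frac{1}{t}\int_0^t \EE\big|\vec\xi_n^\ell(s) - \rho\big|\dd s \to 0,
\]
with the statement for $\lvec\xi_{-n}^\ell$ handled identically by symmetry. Unlike in Propositions \ref{prop:replacement_1} and \ref{replancement_2}, I would \emph{not} try to write $\vec\xi_n^\ell(s)-\rho$ as $L\varphi$ for a corrector; instead the argument exploits the fact that under $\PP$ the marginal law of $\eta(s)$ is $\mu=\bigotimes \mathrm{Ber}(\rho)$ at every time $s\geq 0$, because $\mu$ is invariant for the SSEP and the walk does not affect the environment.

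Writing $\vec\xi_n^\ell(s) = \frac{1}{\ell}\sum_{k=1}^\ell \eta_{X_s+n+k}(s)$ and splitting on $\{|X_s|\leq n\}$ versus $\{|X_s|>n\}$,
\[
\EE\big|\vec\xi_n^\ell(s)-\rho\big| \leq \PP(|X_s|>n) + \EE\sup_{|x|\leq n}\Big|\frac{1}{\ell}\sum_{k=1}^\ell \big(\eta_{x+n+k}(s)-\rho\big)\Big|.
\]
Lemma \ref{lem:rw_max} controls the first term by $Ct^3/n^6 = Ct^{3-6\alpha}$, which vanishes since $\alpha>1/2$. For the second term, at the fixed time $s$ the variables $\{\eta_y(s)\}_{y\in\ZZ}$ are i.i.d.\ $\mathrm{Ber}(\rho)$, so the subgaussian bound of Lemma \ref{lem:subgauss} yields $\PP\big(\big|\frac{1}{\ell}\sum_{k=1}^\ell(\eta_{x+n+k}(s)-\rho)\big|>\beta\big)\leq 2e^{-c\ell\beta^2}$ for each deterministic $|x|\leq n$. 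Integrating this tail and applying a union bound over the $2n+1$ shifts gives
\[
\EE\sup_{|x|\leq n}\Big|\frac{1}{\ell}\sum_{k=1}^\ell(\eta_{x+n+k}(s)-\rho)\Big| \leq \delta + \frac{Cn}{\ell\delta}\,e^{-c\ell\delta^2},
\]
and the choice $\delta=\sqrt{(2\log n)/(c\ell)}$ reduces this to $O\big(\sqrt{(\log t)/\ell}\big)$, which vanishes under \eqref{ass:ell_and_n} since $\ell\to\infty$ much faster than $\log t$. Combining the two estimates, integrating over $s\in[0,t]$, and applying Markov's inequality completes the proof.

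The main obstacle is the scale tension forced by \eqref{ass:ell_and_n}: because $\alpha>1/2$ we need $n\gg\sqrt t$ to keep the walk inside $[-n,n]$, and this in turn forces $\ell\ll t/n\ll\sqrt t$. A crude second-moment estimate $\EE\sup_{|x|\leq n}(\cdots)^2\leq (2n+1)/\ell$ would be useless here since $n/\ell$ does not vanish. Success hinges on the exponential concentration $e^{-c\ell\beta^2}$, available precisely because the SSEP marginal is product Bernoulli, which beats the union bound $(2n+1)$ as soon as $\ell\gg\log n$ — a condition comfortably satisfied by \eqref{ass:ell_and_n}.
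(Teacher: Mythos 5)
Your argument is correct in its main idea and takes a genuinely different, more elementary route than the paper. The paper controls the second moment of the integral and invokes the Lateral Decoupling Lemma (Proposition~\ref{prop:lat_decoupling}, a space-time decorrelation estimate for the SSEP imported from \cite{hkt}): it conditions on the environment filtration $\mathcal{F}_t$, factors out the conditional law of $\{X_s=k\}$ as a function of the environment near the origin, and decouples it from the faraway block average $\vec{\eta}^{\ell}_{n+k}$, obtaining the bound \eqref{key_3} $\lesssim m e^{-t^{2\alpha'-1}} + \ell^{-1} + t^3 m^{-6}$. You avoid the decoupling lemma entirely: you bound $|2-\para\xi_0|\leq 2$, drop the walk's position by taking a supremum over the $O(n)$ admissible shifts, and then use the exact fact that $\eta(s)\sim\mu$ under $\PP$ for every fixed $s$ (since $\mu$ is SSEP-invariant and the walk does not affect the environment) so that Lemma~\ref{lem:subgauss} applies directly. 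This is cleaner — the same sup-and-union-bound device the paper already uses in Lemma~\ref{l3.3} — and it substitutes the hard stationarity of the product measure for a soft decorrelation estimate. The trade-offs are quantitative: your union bound incurs a $\log n$ factor, giving a first-moment bound of order $\sqrt{(\log t)/\ell}$ rather than the paper's variance bound of order $1/\ell$, so you obtain a slightly worse rate, though still polynomial and therefore still sufficient for the Borel--Cantelli step in Lemma~\ref{quenched lemma}.

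One small inaccuracy worth flagging: you assert that \eqref{ass:ell_and_n} implies $\ell\gg\log t$, but \eqref{ass:ell_and_n} only requires $1\ll\ell\ll t/n$, which permits, say, $\ell = \log\log t$, under which $\sqrt{(\log t)/\ell}$ does \emph{not} vanish. So your argument proves the proposition only under the mild extra hypothesis $\ell\gg\log t$. This is harmless for the paper's purposes — Theorem~\ref{quenched t2} and Theorem~\ref{t3} need only one admissible choice of $\ell$ (the paper uses $\ell=t^{0.2}$) — but the claim that your bound vanishes for \emph{all} $\ell$ satisfying \eqref{ass:ell_and_n} is not quite right, and the paper's decoupling-based bound $O(1/\ell)$ does hold for all such $\ell$.
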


\begin{proof}
	Define, for $ m > 0 $, the event
	\begin{equation}\label{21}
	A_m :=\lrch{ \max_{s\leq t} |X_s| < m}.
	\end{equation}
	
	Then
	\begin{equation}\label{eq:latdec01}
	\begin{aligned}
	& \bb E\lrc{\lrp{\frac{1}{t} \int_0^t(2-\para \xi_0(s))(\vec\xi^{\ell}_n(s)-\rho)\dd s }^2}  \\
	& \leq 4\bb P(A_m^c) + \frac{1}{t}\int_0^t \bb E\lrc{\ind{A_{m}}(2-\para \xi_0(s))^2(\vec\xi^{\ell}_n(s)-\rho)^2}\dd s
	\end{aligned}
	\end{equation}
	
	We will prove that, for $ t^{\frac{1}{2}} \ll m \ll n $, the upper bound in the last equation vanishes as $ t\to \infty $.
	To bound the second term, we apply the  Lateral Decoupling Lemma (\cite{hkt}, Proposition 4.1). To do so, we need the random variable inside the expectation to be a function of the exclusion process only. Thus we rewrite the expectation as
	\begin{equation}\label{eq:lat_dec_lem}
	\begin{aligned}
	& \bb E\lrc{\ind{A_{m}}(2-\para \xi_0(s))^2(\vec\xi^{\ell}_n(s)-\rho)^2} \\
	 & = \sum_{|k| <m}\bb E\lrc{ (2-\para \eta_k(s))^2 \bb E\lrp{\Ind{A_{m}, X_s = k}|\mc F_t}(\vec\eta^{\ell}_{n+k}(s)-\rho)^2},
	\end{aligned}
	\end{equation}
	where $ \mc F_t $ is the filtration generated by $ (\eta_s)_{s \in [0,t]} $. If $ m \ll n $ and \eqref{ass:ell_and_n} holds, we can apply Proposition \ref{prop:lat_decoupling} with $ H = t $,  $ f_1(\eta) = (2-\para \eta_k(s))^2\frac{1}{2}\bb E\lrp{\Ind{A_{m}, X_s = k}|\mc F_t} $ and $ f_2(\eta) = (\vec\eta^{\ell}_{n+k}(s)-\rho)^2 $ for all $|k|<m$. Note that the support of $f_1$ is contained in $[-m,m]\times [0,t] \subset [m-t,m]\times [0,t]$, 
	and the support of $f_2$ is contained in $[n+k,n+k+\ell]\times[0,t] \subset [n+k,n+k+t]\times[0,t]$,
	and by \eqref{ass:ell_and_n} and the assumption that $t^{1/2} \ll m \ll n$ the horizontal separation of these boxes is $n+k-m \gg t^{\alpha'}$ for any $\alpha' \in(\frac{1}{2},\alpha)$. 
	Therefore, applying Proposition \eqref{prop:lat_decoupling} it holds that
	\begin{equation}\label{22}
	\begin{aligned}
	& \bb E\lrc{\ind{A_{m}}(2-\para \xi_0(s))^2(\vec\xi^{\ell}_n(s)-\rho)^2} \\
	 =& \sum_{|k| <m}\bb E\lrc{ (2-\para \eta_k(s))^2 \bb E\lrp{\Ind{A_{m}, X_s = k}|\mc F_t}(\vec\eta^{\ell}_{n+k}(s)-\rho)^2}\\
	 \leq& \, 4m\cdot\exp\lrp{-t^{2\alpha' -1}} + \sum_{|k| <m}  \bb E\lrc{\Ind{A_{m}, X_s = k}(2-\para \eta_k(s))^2} \bb E\lrc{(\vec\eta^{\ell}_{n+k}(s)-\rho)^2} \\
	 \leq& \, 4m\cdot\exp\lrp{-t^{2\alpha' -1}} + \frac{4}{\ell}.
	\end{aligned}
	\end{equation}
	
	Using this bound in \eqref{eq:lat_dec_lem}, together with Lemma \eqref{lem:rw_max}, we get
	
	\begin{equation}\label{key_3}
	\begin{aligned}
		 \bb E\lrc{\lrp{\frac{1}{t} \int_0^t(2-\para \xi_0(s))(\vec\xi^{\ell}_n(s)-\rho)\dd s }^2}  
	  \leq  4m\cdot\exp\lrp{-t^{2\alpha' - 1}} + \frac{4}{\ell} +  \frac{c_3t^3}{m^6}
	\end{aligned}
	\end{equation}
	for some $c_3>0$ and $t$ large enough.
	If $ t^{\frac{1}{2}} \ll m \ll n $ and \eqref{ass:ell_and_n} holds then the upper bound vanishes as $ t \to \infty $. By Chebyshev inequality, this finishes the proof.

\end{proof}
One can get Theorem \ref{quenched t2} immediately from propositions \ref{prop:replacement_1}, \ref{replancement_2}, and \ref{ld}.

\subsection{Proof of the asymptotic limit of $\xi(t)$ under the quenched measure}

First recall \eqref{key_1}, \eqref{key_2} and \eqref{key_3}, all these inequalities imply that the convergence in probability holds not only the converge in probability does hold under the annealed measure, but one can also have a polynomially decreasing upper bound by choosing adequate $\alpha,\ell, m$. 
\begin{proof}[Proof of Theorem \ref{t3}]
    Let $\alpha=0.6$, $\ell=t^{0.2}$ and $m=t^{0.55}$, for any $\epsilon>0$ and $t$ large enough, one can check
\begin{equation}\label{annealed-ub}
    \PP\lrc{\frac{1}{t}\Big|\int_0^t(2-\para\xi_0(s))(\xi_0(s)-\rho)\dd s\Big|\geq \epsilon}\leq C(\epsilon) t^{-\frac{1}{15}}
\end{equation}
for some $C(\epsilon)>0$.
\end{proof}

The next lemma shows how to get the convergence in probability under the quenched measure $\ann-a.s.$ from the annealed measure.
\begin{lemma}\label{quenched lemma}
    Under the assumptions of Theorem \ref{t1}, let $Y_t=\int_0^t(2-\para\xi_0(s))(\xi_0(s)-\rho)\dd s$ for $t>0$ and $\ann$ defined in section 2. Then for any $\epsilon,\delta>0$, there exists $t_{\eta}(\epsilon,\delta)>0$ such that
    \begin{equation}
        \ann\lrc{\lrch{P^\eta\lrp{|Y_t|\geq \epsilon t}< \delta}\mbox{ for}~\forall t>t_{\eta}(\epsilon,\delta)}=1.
    \end{equation}
\end{lemma}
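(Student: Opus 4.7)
The plan is to upgrade the annealed bound in Theorem \ref{t3} to an almost-sure quenched statement via a standard Borel--Cantelli along a polynomial subsequence, followed by a deterministic interpolation using that the integrand of $Y_t$ is bounded.

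First, I would fix $\epsilon,\delta>0$ and apply Markov's inequality to Theorem \ref{t3}: since $\PP = \int P^\eta \dd \ann(\eta)$,
\begin{equation*}
\ann\lrc{\lrch{\eta : P^\eta(|Y_t|\geq \tfrac{\epsilon}{2} t) \geq \delta}} \leq \frac{\PP(|Y_t|\geq \tfrac{\epsilon}{2} t)}{\delta} \leq \frac{C(\epsilon/2)}{\delta}\, t^{-1/15}
\end{equation*}
for all $t$ sufficiently large. Next, I would pick a discrete sequence $t_k := k^{16}$, so that $t_k^{-1/15} = k^{-16/15}$ is summable in $k$. By the Borel--Cantelli lemma, there is a $\ann$-null set off which
\begin{equation*}
P^\eta\bigl(|Y_{t_k}| \geq \tfrac{\epsilon}{2} t_k\bigr) < \delta \qquad \text{for all } k \geq k_0(\eta).
\end{equation*}

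The remaining step is the interpolation from the grid $\{t_k\}$ to all $t$. Here I would use the deterministic Lipschitz bound $|Y_t - Y_s| \leq 2|t-s|$ for $s \leq t$, which follows because the integrand $(2-\para\xi_0(s))(\xi_0(s)-\rho)$ is bounded by $2$ in absolute value (since $\para,\rho \in [0,1]$ and $\xi_0 \in \{0,1\}$). For $t \in [t_k, t_{k+1}]$,
\begin{equation*}
|Y_t| \leq |Y_{t_k}| + 2(t_{k+1} - t_k),
\end{equation*}
and since $t_{k+1}-t_k \sim 16 k^{15} = o(t_k)$, one has $2(t_{k+1}-t_k) \leq \tfrac{\epsilon}{2} t_k \leq \tfrac{\epsilon}{2} t$ once $k$ is large enough. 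On this regime, $\{|Y_t|\geq \epsilon t\}\subseteq \{|Y_{t_k}|\geq \tfrac{\epsilon}{2} t_k\}$, so
\begin{equation*}
P^\eta(|Y_t|\geq \epsilon t) \leq P^\eta\bigl(|Y_{t_k}| \geq \tfrac{\epsilon}{2} t_k\bigr) < \delta,
\end{equation*}
which yields the claim with $t_\eta(\epsilon,\delta) := t_{k_0(\eta)}$.

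I do not expect a serious obstacle: the Markov step and Borel--Cantelli step are standard, and the interpolation works because the exponent $16$ satisfies both the summability requirement $16 > 15$ and the sparsity requirement $t_{k+1}/t_k \to 1$. The only mild care needed is to run Borel--Cantelli at level $\epsilon/2$ (not $\epsilon$) so that one retains an $\epsilon/2$ slack for absorbing the Lipschitz correction $2(t_{k+1}-t_k)$. The null set produced this way depends on $(\epsilon,\delta)$, which is all that the statement of Lemma \ref{quenched lemma} requires.
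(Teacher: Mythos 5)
Your proof is correct and follows essentially the same route as the paper: Markov's inequality applied to the annealed bound of Theorem \ref{t3}, Borel--Cantelli along the subsequence $t_k=k^{16}$ (chosen so $k^{-16/15}$ is summable), and a deterministic interpolation using $|Y_t-Y_s|\leq 2|t-s|$. The only cosmetic difference is that you run Borel--Cantelli at level $\epsilon/2$ to conclude at level $\epsilon$, whereas the paper runs it at level $\epsilon$ and concludes at level $2\epsilon$; the two are equivalent.
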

\begin{proof}
    Define a sequence $\{t_k\}_{k\geq1}$ as $t_k=k^{16}$. By \eqref{annealed-ub}, we have for $k$ large enough,
    \begin{equation}
        \PP\lrc{|Y_{t_k}|\geq \epsilon t_k}\leq C(\epsilon)k^{-\frac{16}{15}}.
    \end{equation}
    By Chebyshev inequality,
    \begin{equation}
        \ann\lrc{P^\eta\lrc{|Y_{t_k}|\geq\epsilon t_k}\geq \delta}\leq \frac{1}{\delta}\PP\lrc{|Y_{t_k}|\geq \epsilon t_k}\leq \frac{C(\epsilon)}{\delta}k^{-\frac{16}{15}}.
    \end{equation}
    the upper bound is summable for $k$. Thus by Borel-Cantelli lemma,
    \begin{equation}
        \ann\lrc{\lrch{P^\eta\lrc{|Y_{t_k}|\geq\epsilon t_k}\geq \delta}i.o.}=0.
    \end{equation}
    For any $t\geq 1$, it must lie in the interval $[t_k,t_{k+1})$ for some $k$. Notice that $Y_t$ has bounded increments, which means $|Y_s-Y_r|\leq 2|s-r|$ for any $s,r>0$. This gives the upper bound
    \begin{equation}
        \frac{|Y_t|}{t} \leq\frac{|Y_{t_k}|+2(t_{k+1}-t_k)}{t_k}.
    \end{equation}
    Let $k_\epsilon>0$ satisfy $2(t_{k_\epsilon+1}-t_{k_\epsilon})t_{k_\epsilon}^{-1}<\epsilon$, for any $k>k_\epsilon$ and $t\in[t_k,t_{k+1})$, $\{|Y_t|/t\geq 2\epsilon\}$ implies $\{|Y_{t_k}|/t_k\geq\epsilon\}$. Define $A_{\epsilon,\delta}$ that $A_{\epsilon,\delta}^c=\lrch{\lrch{P^\eta\lrc{|Y_{t_k}|\geq\epsilon t_k}\geq \delta}i.o.}$. Choose any $\eta\in A_{\epsilon,\delta}$, there exists $k_\eta(\epsilon,\delta)$ such that for all $k>k_\eta(\epsilon,\delta)$
    \begin{equation}
        P^\eta\lrc{|Y_{t_k}|\geq\epsilon t_k}<\delta.
    \end{equation}
    Pick $t_\eta(2\epsilon,\delta)=t_{k_\epsilon}\vee t_{k_\eta(\epsilon,\delta)}$ then by the above argument we have for all $t\in[t_k,t_{k+1}),k\geq k_\epsilon\vee k_{\eta}(\epsilon,\delta)$,
    \begin{equation}
        P^\eta\lrc{|Y_{t}|\geq2\epsilon t}\leq P^\eta\lrc{|Y_{t_k}|\geq\epsilon t_k}<\delta
    \end{equation}
    which finishes the proof since $P_\mu\lrp{A_{\epsilon,\delta}}=1$.
\end{proof}
In the last part of this section we prove Theorem \ref{t2}.
\begin{proof}[Proof of Theorem \ref{t2}] 
    From Lemma \ref{quenched lemma}, we just need one more step to reach our final goal. To see this, for any $\epsilon>0$, let 
    \begin{equation}
        A_\epsilon=\bigcap_{n=1}^\infty A_{\epsilon,\frac{1}{n}}.
    \end{equation}
    We have $P_\mu\lrp{A_\epsilon}=1$ since it is a intersection of countably many sets while each has probability 1. Choose any $\eta\in A_\epsilon$, for any $n\geq1$, 
    \begin{equation}
        P^\eta\lrc{|Y_{t}|\geq\epsilon t}<\frac{1}{n}
    \end{equation}
    holds for all $t>t_\eta(\epsilon,\frac{1}{n})$. Thus  $t^{-1}|Y_t|$ converge to zero in probability under $P^\eta$.
\end{proof}

\section{Technical lemmas}

\begin{lemma}[\cite{lugosi}, Theorem 2.8]\label{lem:subgauss}
	Let $ \zeta_1, \zeta_2, \ldots $ be i.i.d. random variables with $ |\zeta_1|\leq 1 $ and $ \bb E\zeta_1 =0 $. Then, for any $ \lambda > 0 $,
	\begin{equation}\label{23}
	\bb P\lrp{\Big| \sum_{j\leq n} b_j \zeta_j \Big| > \lambda} \leq 2\cdot\exp\lrp{-\frac{\lambda^2}{2\sum_{j\leq n}b_j^2}}.
	\end{equation}
\end{lemma}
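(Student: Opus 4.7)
The statement is the classical Hoeffding inequality for a weighted sum of bounded, independent, mean-zero random variables, so the plan is the standard exponential moment (Chernoff) argument. First, for any $s>0$, I would apply Markov's inequality to $\exp\bigl(s\sum_{j\leq n}b_j\zeta_j\bigr)$ and use independence of the $\zeta_j$ to factor the expectation into a product:
\[
\bb P\!\left(\sum_{j\leq n} b_j \zeta_j > \lambda\right) \leq e^{-s\lambda} \prod_{j\leq n} \bb E\!\left[e^{s b_j \zeta_j}\right].
\]

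The key analytic input is Hoeffding's lemma: if $Y$ is mean zero with $|Y|\leq c$, then $\bb E[e^{sY}] \leq e^{s^2 c^2/2}$ for every $s\in\bb R$. One proves this by writing $Y = \tfrac{c+Y}{2c}\cdot c + \tfrac{c-Y}{2c}\cdot(-c)$, applying convexity of $x\mapsto e^{sx}$, taking expectation so that the coefficients in the convex combination become deterministic, and then bounding $\log\bigl(\tfrac{1}{2}e^{-sc} + \tfrac{1}{2}e^{sc}\bigr) \leq s^2 c^2/2$ via a second-order Taylor expansion of the logarithm of the moment generating function. Applied with $Y = b_j\zeta_j$ and $c = |b_j|$, this yields $\bb E[e^{s b_j\zeta_j}] \leq e^{s^2 b_j^2/2}$, and inserting into the previous display gives
\[
\bb P\!\left(\sum_{j\leq n} b_j \zeta_j > \lambda\right) \leq \exp\!\left(-s\lambda + \tfrac{s^2}{2}\sum_{j\leq n} b_j^2\right).
\]

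Optimizing over $s>0$ with the choice $s = \lambda / \sum_{j\leq n} b_j^2$ produces the one-sided Gaussian bound $\exp\bigl(-\lambda^2/(2\sum_{j\leq n} b_j^2)\bigr)$. The same calculation applied to $-\sum_{j\leq n} b_j\zeta_j$, equivalently replacing each $b_j$ by $-b_j$ (which leaves $\sum_{j\leq n} b_j^2$ unchanged), controls the lower tail, and a union bound over the two tails produces the factor of $2$ in the stated inequality.

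There is no genuine obstacle here: the entire argument is textbook, and the only nontrivial ingredient is Hoeffding's lemma, whose short convexity proof is sketched above. This is presumably why the paper simply cites \cite{lugosi} rather than reproducing the derivation. An alternative route, essentially equivalent, is to note that $|\zeta_1|\leq 1$ implies $\zeta_1$ is sub-Gaussian with parameter $1$, and then to use that an independent sum of sub-Gaussians is sub-Gaussian with parameter equal to the sum of the squared individual parameters; that packaging yields the same tail bound.
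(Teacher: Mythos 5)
Your proof is correct and is exactly the standard Chernoff/Hoeffding argument that underlies Theorem 2.8 of the cited reference; the paper itself offers no proof, only the citation, so there is nothing to compare beyond noting that your route is the canonical one.
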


\begin{lemma}\label{lem:gauss_tail}
	For any $ \delta > 0 $,
	\begin{equation}\label{eq:gauss_tail}
	\int_{\delta}\exp\lrp{-\frac{x^2}{2\sigma^2}}\dd x \leq \sqrt {2\pi \sigma^2}\cdot \exp\lrp{-\frac{\delta^2}{2\sigma^2}}.
	\end{equation}
\end{lemma}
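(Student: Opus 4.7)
The plan is to change variables to shift the lower limit of integration down to $0$, then use a simple pointwise bound on the exponent. Write $y = x - \delta$, so that
\begin{equation}
\int_{\delta}^{\infty}\exp\lrp{-\frac{x^2}{2\sigma^2}}\dd x = \int_0^{\infty}\exp\lrp{-\frac{(y+\delta)^2}{2\sigma^2}}\dd y.
\end{equation}
The key observation is that, since both $y$ and $\delta$ are nonnegative,
\begin{equation}
(y+\delta)^2 = y^2 + 2y\delta + \delta^2 \geq y^2 + \delta^2,
\end{equation}
so the integrand factors as a bound:
\begin{equation}
\exp\lrp{-\frac{(y+\delta)^2}{2\sigma^2}} \leq \exp\lrp{-\frac{\delta^2}{2\sigma^2}}\cdot\exp\lrp{-\frac{y^2}{2\sigma^2}}.
\end{equation}

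Pulling the constant factor outside and recognizing the remaining integral as half of the usual Gaussian normalization,
\begin{equation}
\int_0^{\infty}\exp\lrp{-\frac{y^2}{2\sigma^2}}\dd y = \tfrac{1}{2}\sqrt{2\pi\sigma^2},
\end{equation}
we obtain
\begin{equation}
\int_{\delta}^{\infty}\exp\lrp{-\frac{x^2}{2\sigma^2}}\dd x \leq \tfrac{1}{2}\sqrt{2\pi\sigma^2}\cdot\exp\lrp{-\frac{\delta^2}{2\sigma^2}} \leq \sqrt{2\pi\sigma^2}\cdot\exp\lrp{-\frac{\delta^2}{2\sigma^2}},
\end{equation}
which is the claimed bound. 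There is really no technical obstacle here; the only step that requires any thought is noticing that shifting by $\delta$ turns the problem into an exact Gaussian integral after dropping the cross term $2y\delta$, and that this is precisely what yields a factor of $e^{-\delta^2/(2\sigma^2)}$ with a clean prefactor.
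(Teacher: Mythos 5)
Your proof is correct, and it takes a genuinely different route from the paper's. The paper uses a Chernoff-style tilting argument: it multiplies the integrand by $e^{\lambda(x-\delta)}\geq 1$ for $x\geq\delta$, completes the square in the exponent so the integral becomes a full Gaussian integral equal to $\sqrt{2\pi\sigma^2}$, and then optimizes the free parameter $\lambda$ (choosing $\lambda=\delta/\sigma^2$) to recover the exponent $-\delta^2/(2\sigma^2)$. Your approach instead shifts the variable so the lower limit is $0$ and uses the pointwise inequality $(y+\delta)^2\geq y^2+\delta^2$ to factor out $e^{-\delta^2/(2\sigma^2)}$ directly, leaving a half-line Gaussian integral. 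Both are standard Gaussian tail estimates, but your version is more elementary (no free parameter to optimize, no completing the square) and in fact yields the sharper constant $\tfrac12\sqrt{2\pi\sigma^2}$, which you then weaken to match the stated bound. The paper's tilting argument is arguably the more "portable" technique (it is the same move used in Chernoff bounds and works even when the measure is not Gaussian), but for this specific lemma your direct computation is cleaner and gives a slightly better inequality for free.
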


\begin{proof}
	For any $ \lambda >0 $,
	\begin{equation}\label{24}
	\begin{aligned}
	\int_{\delta}\exp\lrp{-\frac{x^2}{2\sigma^2}}\dd x
	& \leq  e^{-\lambda \delta} \int_{\delta}^\infty \exp\lrp{\lambda x-\frac{x^2}{2\sigma^2} } \dd x \\
	& \leq \exp\lrp{-\lambda \delta + \frac{\lambda^2\sigma^2}{2} } \int_{-\infty}^\infty \exp\lrp{-\frac{\lrp{x - \lambda \sigma^2}^2}{2\sigma^2}} \dd x \\
	& = \sqrt {2\pi \sigma^2}\exp\lrp{-\lambda \delta + \frac{\lambda^2\sigma^2}{2}}.
	\end{aligned}
	\end{equation}
	Choosing $ \lambda = \delta/\sigma^2 $ gives the desired bound.
\end{proof}

\begin{lemma}\label{lem:rw_max}For any positive $ \gamma $ and $ t $,
	\begin{equation}
	\bb P \lrp{\sup_{s\leq t} |X_s| \geq \gamma } = O\lrp{\frac{t^3}{\gamma ^6}}.
	\end{equation}
\end{lemma}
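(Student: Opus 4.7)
My plan is to establish the estimate via a sixth-moment bound $\bb E[X_t^6]=O(t^3)$, then invoke Doob's $L^6$ maximal inequality followed by Markov. The target scaling $t^3/\gamma^6$ essentially dictates the exponent: for a martingale whose predictable quadratic variation grows at most linearly, the $2k$-th moment at time $t$ is of order $t^k$, so taking $2k=6$ and then applying Markov at exponent $6$ produces exactly the factor $\gamma^{-6}$.

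The key input is that $X_t$ is a mean-zero martingale with uniformly bounded jump rate. By the definition of the generator in \eqref{generator of the joint process}, $X_t$ jumps by $\pm1$ each at rate $(1-\para)\xi_0(t)+(1-\xi_0(t))\in[1-\para,1]$, so in particular the bracket $\lrb{X}_t$ grows at rate at most $2$. Applying Dynkin's formula to $f(x)=x^{2k}$ and using the elementary identity
\begin{equation*}
(x+1)^{2k}+(x-1)^{2k}-2x^{2k}=2\sum_{i=1}^{k}\binom{2k}{2i}x^{2k-2i},
\end{equation*}
one obtains the differential inequality
\begin{equation*}
\frac{d}{dt}\bb E[X_t^{2k}]\leq 2\sum_{i=1}^{k}\binom{2k}{2i}\bb E[X_t^{2k-2i}],
\end{equation*}
because the environment-dependent factor in the jump rate is bounded by $1$. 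Starting from the immediate estimate $\bb E[X_t^2]\leq\bb E[\lrb{X}_t]\leq 2t$ and iterating this inequality for $k=2,3$ gives $\bb E[X_t^4]=O(t^2)$ and then $\bb E[X_t^6]=O(t^3)$.

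To pass from moments of $X_t$ to moments of the supremum, I would apply Doob's $L^6$ maximal inequality, which yields $\bb E[\sup_{s\leq t}|X_s|^6]\leq(6/5)^6\,\bb E[X_t^6]=O(t^3)$. Markov's inequality then produces
\begin{equation*}
\bb P\lrp{\sup_{s\leq t}|X_s|\geq\gamma}\leq \gamma^{-6}\,\bb E\lrc{\sup_{s\leq t}|X_s|^6}=O(t^3/\gamma^6),
\end{equation*}
which is the claimed bound. No substantive obstacle arises: once the exponent $6$ is identified, the remainder is bookkeeping for the polynomial expansion and a short induction on $k$; the environment disappears from the final estimate because the jump rate is uniformly dominated by $2$ independently of $\xi$.
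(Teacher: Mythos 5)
Your proof is correct, and the overall reduction is the same as the paper's: Doob's $L^{6}$ maximal inequality plus Markov reduces the problem to showing $\bb E[X_t^6]=O(t^3)$. Where you diverge is in how that sixth-moment bound is obtained. The paper represents $X_t$ distributionally as a time-changed simple symmetric walk, $X_t=\sum_{k\le J_t}Y_k$ with $J_t$ the jump count, expands the sixth power and observes that $J_t$ is stochastically dominated by a Poisson variable of mean of order $t$; the polynomial moment bound then drops out in one line. You instead apply Dynkin's formula to $x\mapsto x^{2k}$, use the binomial identity for $(x+1)^{2k}+(x-1)^{2k}-2x^{2k}$ and the uniform bound $1-\para\xi_0\le 1$ on the jump rate to get a closed chain of differential inequalities $\frac{d}{dt}\bb E[X_t^{2k}]\le 2\sum_{i=1}^k\binom{2k}{2i}\bb E[X_t^{2k-2i}]$, and iterate from $k=1$ to $k=3$. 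Both are sound. Your route is more elementary and self-contained (no appeal to stochastic domination or to the exchangeability of the jump directions), and it makes the induction on $k$ transparent; the paper's route is shorter and cleaner once one is comfortable with the time-change picture. Either way the environment disappears because the jump rate is uniformly bounded, which is the essential point.
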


\begin{proof}
	The first observation is that $ X $ is a martingale, so Doob's $ L^p -$inequality gives
	\begin{equation}\label{eq:bound_bad_event}
	\bb P\lrp{\sup_{s\leq t} |X_s| \geq \gamma} \leq \lrp{\frac{6}{5}}^6\frac{\bb E\lrp{X_t^6}}{\gamma^6}.
	\end{equation}

	To bound the sixth moment, we compare our random walk with a simple symmetric walk: let $ Y_1, \ldots, Y_n $ be i.i.d. random variables with $ \bb P\lrp{Y_1 = \pm 1} = 1/2 $ and let $ J_t $ denote the number of times that $ X $ jumps during the time interval $ [0,t] $. Then $ X_t = \sum_{k=1}^{J_t}Y_k $ in distribution, whence
	\begin{equation}\label{eq:4th_moment}
	\begin{aligned}
	\bb E \lrp{X_t^6}
	& = \bb E \lrp{\sum_{i\leq J_t}Y_i^6 + 15\sum_{i<j\leq J_t}Y_i^2Y_j^4 + 90\sum_{i<j<k\leq J_t}Y_i^2Y_j^2Y_k^2} \\
	& \leq \bb E \lrp{J_t + 15J_t^2 + 90 J_t^3}.
	\end{aligned}
	\end{equation}
	Since $ J_t $ is stochastically dominated by a mean $ t $ Poisson random variable, the last expectation is bounded by a multiple of $ t^3 $.
\end{proof}

The next lemma comes from \cite{hkt}. To get the version stated below, one only needs to change the last line of the original proof, using \eqref{eq:gauss_tail}.

\begin{proposition}[Lateral Decoupling, \cite{hkt} Proposition 4.1]\label{prop:lat_decoupling}
	Let $ f_1, f_2:\{0,1\}^{\bb Z}\times \bb R_+ \to [0,1] $ be measurable functions and $ H, y, \alpha >0 $. Let $ B_1 = [-H,0]\times [0,H] \subset \bb R^2 $ and $ B_2 = [y, y+H]\times [0,H] \subset \bb R^2 $. Assume $ f_1 $ is supported on $ B_1 $, that is, if the trajectories $ \eta, \eta':\bb Z \times \bb R_+ \to \{0,1\} $ satisfy $ \eta_x(s) = \eta'_x(s) $ for all $ (x,s)\in B_1 $ then $ f_1(\eta)=f_1(\eta') $. Assume $ f_2 $ is supported on $ B_2 $. Finally, denote by $ \bb P_{\rho} $ the law of SSEP started from equilibrium at density $ \rho \in (0,1) $, that is, started from the product measure $ \otimes_{x\in \bb Z}\mathrm{Ber}(\rho) $. Let $ \bb E_{\rho} $ be the expectation with respect to $ \bb P_{\rho} $.
	
	Then $ y \geq H^{\alpha} $ implies
	\begin{equation}\label{25}
	\bb E_{\rho}\lrc{f_1f_2}\leq \bb E_{\rho}\lrc{f_1}\bb E_{\rho}\lrc{f_2} + C\,e^{-H^{2\alpha - 1}}
	\end{equation}
	for some $ C > 0 $.
\end{proposition}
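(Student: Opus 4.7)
My plan is to follow the graphical-construction/decoupling approach of \cite{hkt}, replacing only the final tail estimate with an application of Lemma \ref{lem:gauss_tail}. Attach independent rate-$1$ Poisson clocks $\{N_e\}$ to every nearest-neighbour edge $e = \{x, x+1\}$ of $\ZZ$; at each ring of $N_e$, swap the values at the two endpoints. Starting from $\otimes_{x \in \ZZ} \mathrm{Ber}(\rho)$, this produces a realisation of the SSEP under $\bb P_\rho$. Tracing each space-time point backward through the stirring, the value $\eta_x(s)$ equals the initial occupation at the endpoint of a continuous-time simple symmetric random walk launched from $x$ and run for time $s$.

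Let $R_i$ ($i = 1,2$) denote the random set of Poisson clock events and initial-configuration sites used by the backward walks emanating from all space-time points of $B_i$; its spatial extent is controlled by the maximum backward displacement of those walks. By construction, $f_i$ is a measurable function of the inputs in $R_i$. On the good event $G := \{R_1 \cap R_2 = \emptyset\}$, the inputs determining $f_1$ and $f_2$ are disjoint subfamilies of the independent collection $(\eta(0), \{N_e\})$, so a standard coupling with an independent copy of the environment outside $R_1$ gives
\begin{equation*}
    \bb E_\rho[f_1 f_2] \leq \bb E_\rho[f_1] \bb E_\rho[f_2] + \bb P_\rho(G^c),
\end{equation*}
using $f_1, f_2 \in [0,1]$.

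It remains to control $\bb P_\rho(G^c)$. For $R_1$ and $R_2$ to meet, some backward walk started in $B_1 \subset [-H,0] \times [0,H]$ or one started in $B_2 \subset [y, y+H] \times [0,H]$ must travel horizontally by at least $y/2$ within a time window of length $\leq H$. A Chernoff/Bernstein-type bound yields a sub-Gaussian estimate with variance of order $H$ for such a displacement event. Summing these tails over the polynomial-in-$H$ number of starting space-time points in $B_1 \cup B_2$ and then invoking Lemma \ref{lem:gauss_tail} -- the single line where the argument diverges from \cite{hkt} -- absorbs the polynomial prefactor into the Gaussian exponent, yielding $\bb P_\rho(G^c) \leq C \exp(-H^{2\alpha - 1})$ whenever $y \geq H^\alpha$, since then $(y/2)^2/H \geq c H^{2\alpha - 1}$.

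The main obstacle is exactly this last step: a naive union bound over the starting points in $B_1 \cup B_2$ would inflate the estimate by a polynomial factor in $H$, which for $\alpha$ close to $1/2$ would dominate the exponential $e^{-H^{2\alpha - 1}}$ and break the statement. Replacing the crude sum by the integrated Gaussian tail provided by Lemma \ref{lem:gauss_tail} is what pulls the prefactor into the exponent and delivers precisely the exponent $2\alpha - 1$ claimed in the proposition.
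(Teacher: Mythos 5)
The paper itself does not prove this proposition; it only cites \cite{hkt} Proposition 4.1 and remarks that ``one only needs to change the last line of the original proof, using \eqref{eq:gauss_tail}.'' Your proposal reconstructs the graphical-construction / disjointness-of-ancestries strategy that \cite{hkt} uses, and correctly identifies that the quoted modification is the substitution of the final tail estimate by Lemma~\ref{lem:gauss_tail}. At that structural level you are following the same route as the paper.

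Where you go astray is in the role you ascribe to Lemma~\ref{lem:gauss_tail}. That lemma does not ``pull a polynomial prefactor into the Gaussian exponent''; it is merely the elementary bound $\int_\delta^\infty e^{-x^2/2\sigma^2}\,\dd x \le \sqrt{2\pi\sigma^2}\,e^{-\delta^2/2\sigma^2}$, which in fact \emph{produces} an extra $\sqrt{\sigma^2}$ prefactor. In the body of the paper it is invoked (e.g.\ in Lemma~\ref{l3.3}, lines going from \eqref{eq:gauss_tail} to the bound $\delta + 12\frac{n^{5/2}}{t}\exp(-\cdot)$) to convert an $L^1$-norm, written as an integral of a Hoeffding tail over the deviation level $\beta$, into a single Gaussian term; it has nothing to do with summing over starting space-time points. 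The polynomial-in-$H$ prefactor coming from your union bound over $B_1 \cup B_2$ is harmless for a different reason: for $\alpha > 1/2$ one has $H^{2\alpha-1}/\log H \to \infty$, so any polynomial $H^k$ is eventually dominated by a small fraction of the exponential (and for $\alpha \le 1/2$ the statement is vacuous since $e^{-H^{2\alpha-1}}$ is bounded below). And for the way Proposition~\ref{prop:lat_decoupling} is actually applied in \eqref{22}, it is multiplied by another polynomial factor $m$ anyway, so a polynomial prefactor in the decoupling error would make no difference. Thus your overall strategy is right, but the ``main obstacle'' you highlight is not the real issue, and the mechanism by which Lemma~\ref{lem:gauss_tail} modifies the \cite{hkt} argument is not the one you describe.
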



\begin{thebibliography}{1}
\bibitem{avenassep}Avena, L. (2012). Symmetric exclusion as a model of non-elliptic dynamical random conductances. Electronic Communications in Probability 17, no. 44, 1-8.
\bibitem{adhrldp} Avena, L., den Hollander, F. and Redig, F. (2010). Large deviation principle for one-dimensional random walk in dynamic random environment: attractive spin-flips and simple symmetric exclusion. Markov Processes and Related Fields v.16, Issue 1, pp. 139-168.
\bibitem{conemixing} Avena, L., den Hollander, F. and Redig, F. (2011). Law of large numbers for a class of random walks in dynamic random environments. Electronic Journal of Probability 16, no. 21, pp. 587-617.
\bibitem{afjv} Avena, L., Franco, T., Jara, M. and Völlering, F. (2015). Symmetric exclusion as a random environment: hydrodynamic limits. Annales de l'IHP Probabilités et statistiques 51, no. 3, pp. 901-916.
\bibitem{ajv} Avena, L., Jara, M. and Voellering, F. (2018). Explicit LDP for a slowed RW driven by a symmetric exclusion process. Probability Theory and Related Fields 171(3), pp. 865-915.
\bibitem{bd14} Berger, Noam and Deuschel, Jean-Dominique. (2014). A quenched invariance principle for non-elliptic random walk in i.i.d. balanced random environment. Probability Theory and Related Fields 158(1-2), pp. 91-126. 
\bibitem{lugosi} Boucheron, S., Lugosi, G., and Massart, P. (2013). Concentration inequalities: A nonasymptotic theory of independence. Oxford university press.
\bibitem{contact} den Hollander, F. and dos Santos, R.S. (2014). Scaling of a random walk on a supercritical contact process. Annales de l'IHP Probabilités et statistiques 50, no. 4, pp. 1276-1300.
\bibitem{guoramirez}Deuschel, J.D., Guo, X. and Ramírez, A.F. (2018). Quenched invariance principle for random walk in time-dependent balanced random environment. Annales de l'Institut Henri Poincaré, Probabilités et Statistiques 54, no. 1, pp. 363-384. 
\bibitem{dossantos} dos Santos, R.S. (2014). Non-trivial linear bounds for a random walk driven by a simple symmetric exclusion process. Electronic Journal of Probability 19, no. 49,  pp. 1-18.
\bibitem{ethierkurtz}Ethier, S.N. and Kurtz, T.G. (2009). Markov processes: characterization and convergence. John Wiley \& Sons.
\bibitem{gz12} Guo, Xiaoqin and Zeitouni, Ofer. (2012). Quenched invariance principle for random walks in balanced random environment. Probability Theory and Related Fields 152(1-2), pp. 207-230. 
\bibitem{rwonrw} Hilário, M., Den Hollander, F., Sidoravicius, V., dos Santos, R.S. and Teixeira, A. (2015). Random walk on random walks. Electronic Journal of Probability 20, no. 95, pp. 1-35.
\bibitem{hkt}Hil\'{a}rio, M.R., Kious, D. and Teixeira, A. (2020). Random walk on the simple symmetric exclusion process. Communications in Mathematical Physics, 379(1), pp. 61-101.	
\bibitem{hs} Huveneers, F. and Simenhaus, F. (2015). Random walk driven by simple exclusion process. Electronic Journal of Probability 20, no. 105,  pp. 1-42.
\bibitem{hs2} Huveneers, F. and Simenhaus, F. (2020). Evolution of a passive particle in a one-dimensional diffusive environment. arXiv preprint arXiv:2012.08394.
\bibitem{jm} Jara, M. and Menezes, O. (2020). Symmetric exclusion as a random environment: invariance principle. Annals of Probability 48(6), pp. 3124-3149.
\bibitem{kozmatoth}Kozma, G. and Tóth, B. (2017). Central limit theorem for random walks in doubly stochastic random environment: ${H_{-1}} $ suffices. Annals of Probability 45(6B), pp. 4307-4347.
\bibitem{kipnisvaradhan}Kipnis, C. and Varadhan, S.S. (1986). Central limit theorem for additive functionals of reversible Markov processes and applications to simple exclusions. Communications in Mathematical Physics, 104(1), pp.1-19.
\bibitem{law82} Lawler, G.F. (1982). Weak convergence of a random walk in a random environment. Communications in Mathematical Physics 87(1), pp. 81-87. 
\bibitem{agha} Rassoul-Agha, F. and Seppäläinen, T. (2005). An almost sure invariance principle for random walks in a space-time random environment. Probability theory and related fields 133(3), pp. 299-314.
\end{thebibliography}
\end{document}